\numberwithin{equation}{section}
\providecommand{\dx}{\, \mathrm{d} x}
\providecommand{\R}{\mathbb{R}}
\newcommand{\HN}{\mathcal H^{n-1}}
\newcommand{\rn}{\mathbb R^n}
\newcommand{\Sn}{\mathbb S^{n-1}}
\newcommand{\e}{\varepsilon}
\newcommand{\step}[1]{\medskip\noindent\emph{Step #1. }}
\newcommand{\ignore}[1]{}
\newtheorem{definition}{Definition}
\newtheorem{theorem}{Theorem}
\newtheorem{remark}{Remark}
\newtheorem{lemma}{Lemma}
\newtheorem{assumption}{Assumption}
\newtheorem{example}{Example}
\numberwithin{theorem}{section}
\numberwithin{lemma}{section}
\numberwithin{example}{section}
\newtheorem{theoremalph}{Theorem}
\author[A.\ Cianchi]{Andrea Cianchi}
\address{Andrea Cianchi, Dipartimento di Matematica e Informatica \lq\lq U. Dini"\\
Universit\`a di Firenze\\
Viale Morgagni 67/a\\
50134 Firenze\\
Italy} 
\email{andrea.cianchi@unifi.it}
\author[M.\ Sch\"affner]{Mathias Sch\"affner}
\address{Mathias Sch\"affner, Institut f\"ur Mathematik, MLU Halle-Wittenberg, Theodor-Lieser-Stra\ss e 5, 06120 Halle (Saale), Germany}
\email{mathias.schaeffner@mathematik.uni-halle.de}
\subjclass[2000]{49N60}
\keywords{Local minimizers,  local boundedness, unbalanced Orlicz growth, Orlicz-Sobolev inequalities}
\title{Local boundedness of minimizers   under unbalanced Orlicz growth conditions
} 
\begin{document}
\maketitle


\begin{abstract}
 Local minimizers of integral functionals of the calculus of variations are analyzed under  growth conditions dictated by different lower and upper bounds for the integrand.  Growths 
 of non-necessarily power type are allowed. The local boundedness of the relevant minimizers is established under a suitable balance between the lower and the upper bounds. Classical minimizers, as well as  quasi-minimizers are included in our discussion. Functionals subject to so-called $p,q$-growth conditions are embraced as special cases and the corresponding sharp results available in the literature are recovered.  
\end{abstract}

\section{Introduction}\label{sec:intro}

We are concerned with the local boundedness of local minimizers, or quasi-minimizers, of integral functionals of the form
\begin{equation}\label{eq:int}
\mathcal F(u,\Omega) =\int_\Omega f(x,u,\nabla u)\,dx,
\end{equation}
where $\Omega$ is an open set in $\R^n$, with $n \geq 2$, and $f: \Omega \times \R \times \R^n \to \R$ is a Carath\'eodory function subject to proper structure and growth conditions.  
Besides its own interest, local boundedness is needed to ensure certain higher regularity properties of minimizers. Interestingly, some regularity results for minimizers admit variants that require weaker hypotheses under the a priori assumption of their local boundedness.

Local boundedness of local minimizers of the functional $\mathcal F$ is classically guaranteed
if $f(x, t, \xi)$ is subject to lower and upper bounds in terms of positive multiples of $|\xi|^p$, for some $p \geq 1$. 
This result can be traced back to the work of Ladyzhenskaya and Ural'ceva \cite{LadUr}, which, in turn, hinges upon methods introduced by De Giorgi in his regularity theory for linear elliptic equations with merely measurable coefficients.  

The study of functionals built on integrands $f(x, t, \xi)$ bounded from below and above by different powers $|\xi|^p$ and $|\xi|^q$, called with $p,q$-growth in the literature, was initiated some fifty years ago. A regularity theory for minimizers under assumptions of this kind calls for additional structure conditions on $f$, including convexity in the gradient variable.
As shown in various papers starting from the nineties of the last century, 
 local minimizers of functional with $p,q$-growth  are locally bounded under diverse structure conditions, provided that the difference between $q$ and $p$ is not too large, depending on the dimension $n$. This issue was addressed in \cite{MP94, MN91} and, more recently, in \cite{CMM15,CMM23}. Related questions are considered in \cite{BMS90, FS93, Str91} in connection with 
 anisotropic growth conditions.  By contrast, counterexamples show that unbounded minimizers may exist if the exponents $p$ and $q$ are too far apart \cite{G87, H92, Marc_prepr, Mar91}. 
The gap between the assumptions on $p$ and $q$ in these examples and in the regularity result has recently been filled in the paper \cite{HS21}, where the local boundedness of minimizers is established for the full range of exponents $p$ and $q$ excluded from the relevant counterexamples. An extension of the techniques from \cite{HS21} has recently been applied in \cite{DG22} to extend the boundedness result to obstacle problems.

In the present paper,  the conventional realm of polynomial growths is abandoned and the question of local boundedness of local minimizers, and quasi-minimizers, is addressed under bounds on $f$ of Orlicz type.  More specifically, the growth of $f$ is assumed to be governed by  Young functions, namely nonnegative convex functions vanishing at $0$. 
The local boundedness of minimizers in the case when lower and upper bounds on $f$ are imposed in terms of the same Young function follows via a result from \cite{cianchi_AIHP}, which also deals with anisotropic Orlicz growths. The same problem for solutions to elliptic equations is treated in \cite{korolev}. 

Our focus here is instead on the situation when different Young functions  $A(|\xi|)$ and $B(|\xi|)$ bound $f(x,t, \xi)$ from below and above. Functionals with $p,q$-growth are included as a special instance. A sharp balance condition between the Young functions $A$ and $B$ 
is exhibited for any local minimizer of the functional $\mathcal F$ to be locally bounded. Bounds on  $f(x,t, \xi)$ depending on a function $E(|t|)$ are also included in our discussion.  Let us mention that results in the same spirit can be found in the paper \cite{dallaglio}, where, however, more restrictive non-sharp assumptions are imposed. 

The global boundedness of global minimizers of functionals and of solutions to boundary value problems for elliptic equations subject to Orlicz growth conditions has also been examined in the literature and is the subject e.g. of \cite{Al, BCM, cianchi_CPDE, Ta1, Ta2}. Note that, unlike those concerning the local boundedness of local minimizers and local solutions to elliptic equations, global boundedness results in the presence of prescribed boundary conditions just require
 lower bounds in the gradient variable for integrands of functionals or equation coefficients. Therefore, the question of imposing different lower and upper bounds does not arise with this regard.

Beyond boundedness, several further aspects of the regularity theory of solutions to variational problems and associated Euler equations, under unbalanced lower and upper bounds, have been investigated. The early papers \cite{Mar91, Mar93} have been followed by various contributions on this topic, a very partial list of which includes \cite{BCM18, BM18, BS19c, BS22,  BB18, BCSV, BGS,  BO20,  CKP11, CKP14, CM15, DM21, DMprep, ELP, ELM04, HO22, Mar21}. A survey of investigations around this area can be found in \cite{MR21}. In particular, results from \cite{BCM18, BB18, CKP11,DM21b,HO22prep} demonstrate the critical role of local boundedness for higher regularity of local minimizers, which we alluded to above.

%
 




\section{Main result}\label{sec:main}

We begin by enucleating a basic case of our result for integrands in \eqref{eq:int} which do not depend on $u$. Namely, we consider functionals of the form
\begin{equation}\label{eq:int1}
\mathcal F(u,\Omega) =\int_\Omega f(x,\nabla u)\,dx,
\end{equation}
where $$f: \Omega \times \rn \to \R. $$ A standard structure assumption to be fulfilled by $f$ is that 
\begin{equation}\label{convex}
\text{ the function} \quad  \R^n \ni \xi\mapsto f(x,\xi) \quad \text{ is convex for a.e. $x \in \Omega$.}
\end{equation}
Next, an $A,B$-growth condition on $f$ is imposed, in the sense that
\begin{align}\label{ass1bis}
A(|\xi|)-L\leq f(x,\xi)\leq B(|\xi|)+L \quad \text{for  a.e. $x\in\Omega$ and every  $\xi \in \R^n$,}
\end{align}
where $A$ is a Young function and $B$ is a Young function satisfying the  $\Delta_2$-condition near infinity. By contrast, the latter condition is not required on the lower bound $A$.

The function $A$ dictates the natural functional framework for the trial functions $u$ in the minimization problem for   $\mathcal F$. It is provided by the Orlicz-Sobolev class   $V^1_{\rm loc}K^A(\Omega)$ of those weakly differentiable functions on $\Omega$ such that
$$\int_{\Omega '} A(|\nabla u|)\, dx < \infty$$
for every open set  $\Omega ' \Subset \Omega$. 
\\
Besides standard local minimizers, we can as well deal with so-called quasi-minimizers, via the very same approach.
 A function $u \in V^1_{\rm loc}K^A(\Omega)$ is said to be a local quasi-minimizer of $\mathcal F$ if 
$$ \mathcal F(u, \Omega')<\infty$$
for every open set  $\Omega ' \Subset \Omega$,
and there exists a constant $Q\geq 1$ such that
\begin{equation}\label{min}
\mathcal F(u,{\rm supp}\,\varphi)\leq Q \mathcal F(u+\varphi,{\rm supp}\,\varphi)
\end{equation}
for every $\varphi \in V^1_{\rm loc}K^A(\Omega)$ such that ${\rm supp}\,\varphi \Subset \Omega$. Plainly, $u$ is a standard local minimizer of $\mathcal F$ provided that inequality \eqref{min} holds with $Q=1$.

Throughout the paper, we shall assume that 
\begin{equation}\label{divinf}
\int^\infty\bigg(\frac{t}{A(t)}\bigg)^\frac1{n-1}\,dt=\infty.
\end{equation}
Indeed, if $A$ grows so fast near infinity that 
\begin{equation}\label{convinf}
\int^\infty\bigg(\frac{t}{A(t)}\bigg)^\frac1{n-1}\,dt<\infty,
\end{equation}
then every function $u \in V^1_{\rm loc}K^A(\Omega)$ is automatically bounded, irrespective of whether it minimizes $\mathcal F$ or not. This is due to the inclusion 
\begin{equation}\label{sep30}
 V^1_{\rm loc}K^A(\Omega) \subset L^\infty_{\rm loc} (\Omega),
 \end{equation}
which holds as a consequence of a Sobolev-Poincar\'e inequality in Orlicz spaces.


Heuristically speaking, our result ensures that any local quasi-minimizer of $\mathcal F$ as in \eqref{eq:int1}  is locally bounded, provided that the function $B$  does not grow too quickly near infinity compared to $A$. The maximal admissible growth of $B$ is described through the sharp Sobolev conjugate $A_{n-1}$ of $A$ in dimension $n-1$, whose definition is recalled in the next section.
More precisely,  if 
\begin{equation}\label{subcrit}
n \geq 3 \quad  \text{and} \quad \displaystyle  \int^\infty\bigg(\frac{t}{A(t)}\bigg)^\frac1{n-2}\,dt= \infty,
\end{equation}
then $B$ has to be dominated by $A_{n-1}$ near infinity, in the sense that
\begin{align}\label{ass:BAn1}
B(t)\leq A_{n-1}(L t)\qquad\mbox{for  $t\geq t_0$,}
\end{align}
for some positive constants $L$ and $t_0$.
\\ On the other hand, in the  regime complementary  to \eqref{subcrit}, namely 
in either of the following cases
\begin{equation}\label{supercrit}
\begin{cases}
n=2
\\ 
n \geq 3 \quad  \text{and} \quad \displaystyle  \int^\infty\bigg(\frac{t}{A(t)}\bigg)^\frac1{n-2}\,dt< \infty,
\end{cases}
\end{equation}
no additional hypothesis  besides the $\Delta_2$-condition near infinity is needed on $B$.
Notice that, by an Orlicz-Poincar\'e-Sobolev inequality on $\Sn$, both options in \eqref{supercrit} entail that  $ V^1_{\rm loc}K^A(\Sn) \subset L^\infty_{\rm loc} (\Sn)$.

\smallskip
Altogether, our boundedness result for functionals of the form \eqref{eq:int1} reads as follows.

\begin{theorem}\label{Tbis}
Let $f: \Omega \times \rn \to \R$ be a Carath\'eodory function satisfying the structure assumption  \eqref{convex}.  Suppose that the growth condition \eqref{ass1bis} holds for some Young functions $A$ and $B$, such that $B \in \Delta_2$ near infinity. Assume that either condition \eqref{supercrit} is in force, or condition \eqref{subcrit} is in force and $B$ fulfills estimate  \eqref{ass:BAn1}. 
Then any local quasi-minimizer of the functional $\mathcal F$ in \eqref{eq:int1} is locally bounded in $\Omega$.
%
\end{theorem}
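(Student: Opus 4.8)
The plan is to follow the De Giorgi iteration scheme adapted to the Orlicz setting, in the spirit of \cite{HS21,cianchi_AIHP}. Fix a ball $B_R \Subset \Omega$ and, for a level $k>0$ and radius $\rho \le R$, work with the truncated function $v = (u-k)_+$ and test the quasi-minimality \eqref{min} against $\varphi = -\eta v$, where $\eta$ is a standard cut-off between two concentric balls $B_\rho \subset B_{R}$. Using the lower bound $A(|\xi|)-L \le f$ on the left, the upper bound $f \le B(|\xi|)+L$ on the right, and convexity \eqref{convex} to handle the mixed gradient term $\nabla(u+\varphi) = (1-\eta)\nabla u - v\nabla\eta$ on the super-level set $A_{k,\rho} = \{u>k\}\cap B_\rho$, one arrives at a Caccioppoli-type inequality of the form
\begin{equation*}
\int_{A_{k,\rho}} A(|\nabla v|)\,dx \;\lesssim\; \int_{A_{k,\sigma}} B\!\left(\frac{v}{\sigma-\rho}\right) dx \;+\; L\,|A_{k,\sigma}|,
\end{equation*}
for $\rho<\sigma\le R$, where the presence of $B$ on the right-hand side (rather than $A$) is precisely the source of the unbalanced difficulty.

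The core of the argument is then to convert this energy bound into a decay estimate for the sequence of quantities $a_j = |A_{k_j,\rho_j}|$ (or a weighted analogue), where $k_j \nearrow k_\infty$ and $\rho_j \searrow \rho_\infty$ geometrically. For this I would invoke the sharp Orlicz–Sobolev inequality in dimension $n-1$, which is the reason the conjugate $A_{n-1}$ appears: on almost every sphere one controls $\|v\|$ in the Orlicz norm built on $A_{n-1}$ by $\|\nabla v\|$ in the $A$-norm, and integrating in the radial variable (a co-area / slicing argument over the family of spheres) upgrades this to the full ball. Under \eqref{subcrit}, the hypothesis \eqref{ass:BAn1} guarantees $B(v) \lesssim A_{n-1}(Lv)$, so the right-hand side $\int B(v/(\sigma-\rho))$ is absorbed into what the Sobolev inequality produces from $\int A(|\nabla v|)$, yielding a gain of integrability with a genuine exponent strictly bigger than $1$ on a power of $|A_{k,\rho}|$. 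In the complementary regime \eqref{supercrit} the embedding $V^1 K^A(\Sn)\subset L^\infty(\Sn)$ means the slicing already lands in $L^\infty$ on spheres, and no constraint on $B$ beyond $\Delta_2$ is needed — one then runs a slightly different iteration exploiting the radial direction alone. The $\Delta_2$-condition on $B$ is used throughout to pass freely between $B(v)$, $B(cv)$ and to split integrals over sub-level sets.

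Having established an inequality of the schematic type $a_{j+1} \le C\,b^{j}\,a_j^{1+\delta}$ for suitable constants $C>0$, $b>1$, $\delta>0$, the standard fast-geometric-convergence lemma (De Giorgi's iteration lemma) shows that $a_j \to 0$ provided $a_0 = |A_{k_0,R}|$ is small enough; this smallness is arranged by choosing the initial level $k_0$ large, using that $\int_{B_R} A(|\nabla u|)\,dx<\infty$. One concludes $\operatorname{ess\,sup}_{B_{R/2}} u \le k_\infty<\infty$, and applying the same reasoning to $-u$ (the structure conditions being symmetric in $\xi \mapsto -\xi$ only through $|\xi|$, which is even) gives the two-sided bound, hence local boundedness in $\Omega$. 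I expect the main obstacle to be the slicing step: carrying the sharp Sobolev inequality on $(n-1)$-spheres back to a usable estimate on balls, with constants tracked precisely enough that the exponent $1+\delta$ is strictly larger than $1$ exactly under the borderline hypothesis \eqref{ass:BAn1} (and not under anything stronger), and simultaneously handling the non-$\Delta_2$ lower function $A$ on the left-hand side, where one cannot casually absorb constants. Verifying that \eqref{ass:BAn1} is both sufficient here and sharp (matching the known $p,q$-counterexamples) is the delicate quantitative heart of the proof.
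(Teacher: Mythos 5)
Your overall scheme (Caccioppoli estimate, Orlicz--Sobolev on spheres, slicing, De Giorgi iteration) matches the paper's strategy, but there is a decisive gap at exactly the place you flag as the ``delicate quantitative heart'': a \emph{standard} cut-off between $\mathbb B_\rho$ and $\mathbb B_\sigma$ does not allow the slicing argument to land on the $(n-1)$-dimensional conjugate $A_{n-1}$. After writing $\int B(c\,v\,|\nabla\eta|)$ in polar coordinates and using \eqref{ass:BAn1} together with the Sobolev inequality \eqref{ineq:sobsphere} on the sphere of radius $r$, you obtain, for each $r$, a bound of the form $\Phi_q\bigl(cL\kappa\,F_r(v)^{1/(n-1)}\bigr)F_r(v)$, where $F_r(v)=\int_{\Sn}A(|v_r|)+A(|\nabla_{\mathbb S}v_r|)\,d\mathcal H^{n-1}$. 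The factor $\Phi_q(\cdot)$ is a consequence of the non-homogeneity of $B$ and $A_{n-1}$, and it cannot be pulled out of the radial integral unless $F_r(v)$ is controlled uniformly on the set where $\nabla\eta\neq0$; with a linear cut-off this set is the full annulus, and $F_r(v)$ has no pointwise bound there. The paper's key device (Lemma~\ref{L:optim}) is to \emph{construct} an optimized cut-off whose gradient is supported only on the ``good'' Chebyshev set $U$ of radii where $F_r(v)\leq \frac{4}{(\sigma-\rho)r^{n-1}}F(v,\rho,\sigma)$, which always has measure $\geq\frac12(\sigma-\rho)$. This is exactly what lets the $\Phi_q$ factor be bounded by $\Phi_q\bigl(cF(v,\rho,\sigma)^{1/(n-1)}/(\sigma-\rho)^{n/(n-1)}\bigr)$ and makes the sharp hypothesis on $A_{n-1}$ usable; the paper explicitly remarks that a more standard choice of trial functions would only allow $B$ to grow up to $A_n$, the $n$-dimensional conjugate, i.e.\ a strictly weaker theorem.

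Two smaller points. First, the test function should be $\varphi=-\eta^{q}(u-k)_+$ with $q$ a $\Delta_2$-exponent of $B$, not $\varphi=-\eta(u-k)_+$: with the linear power, convexity produces a term $\eta\,B\bigl(v|\nabla\eta|/\eta\bigr)$ with no compensating factor, which is unbounded as $\eta\to0^+$; with $\eta^q$, the bound $B(\lambda t)\leq\lambda^q B(t)$ cancels the $\eta^{-q}$ exactly. Second, since $A$ need not be $\Delta_2$ and is genuinely non-homogeneous, the Young functions must be modified near the origin and the assumptions normalized so that they hold globally (Lemma~\ref{L:widehateE} in the paper), and the iteration quantity must carry both $\int A((u-k)_+)$ and $\int A(|\nabla(u-k)_+|)$ because Orlicz--Sobolev inequalities necessarily keep a zero-order term on both sides.
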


Assume now that $\mathcal F$ has the general form \eqref{eq:int}, and hence 
$$f: \Omega \times \R \times  \rn \to \R. $$
Plain convexity in the gradient variable is no longer sufficient, as a structure assumption, for a local boundedness result to hold.
 One admissible strengthening consists of coupling it with
 a kind of almost monotonicity condition in the $u$ variable. Precisely, one can suppose that
\begin{equation}\label{struct2}
\begin{cases} \text{ the function} \quad  \R^n \ni \xi\mapsto f(x,t,\xi) \quad \text{ is convex for a.e. $x \in \Omega$ and every $t \in \R$,}
\\ f(x,t,\xi)\leq Lf(x,s,\xi)+E(|s|)+L \quad \text{if $|t|\leq |s|$, for  a.e. $x \in \Omega$ and every $\xi \in \R^n$,}
\end{cases}
\end{equation}
where $L$ is a positive constant and $E: [0, \infty) \to [0, \infty)$ is a non-decreasing function fulfilling the $\Delta_2$-condition near infinity. 
\\ An alternate condition which still works is the joint convexity of $f$  in the couple $(t,\xi)$, in the sense that 
\begin{equation}\label{struct1}
\text{ the function} \quad \R\times \R^n \ni (t,\xi)\mapsto f(x,t,\xi) \quad \text{ is convex for a.e. $x \in \Omega$.}
\end{equation}


The  growth of $f$  is governed by the following bounds:  
\begin{align}\label{ass1}
A(|\xi|)-E(|t|)-L\leq f(x,t,\xi)\leq B(|\xi|)+E(|t|)+L \quad \text{for  a.e. $x\in\Omega$ and every $t\in \R$ and $\xi \in \R^n$,}
\end{align}
where $A$ is a Young function, $B$ is a Young function satisfying the  $\Delta_2$-condition near infinity, and $E$ is the same function as in \eqref{struct2}, if this assumption is in force.

The appropriate function space for trial functions in the definition of quasi-minimizer of the functional $\mathcal F$ is still $V^1_{\rm loc}K^A(\Omega)$, and the definition given in the special case \eqref{eq:int1}  carries over to the present general framework.

The bound to be imposed on the function $B$ is the same as in the $u$-free case described above. On the other hand,
the admissible growth of the function $E$ is dictated by the Sobolev conjugate $A_n$ of $A$ in dimension $n$. Specifically, we require that
\begin{align}\label{ass:EAn1}
 E(t)\leq A_{n}(L t)\qquad\mbox{for  $t\geq t_0$,}
\end{align}
for some positive constants $L$ and $t_0$.

\smallskip
Our comprehensive result then takes the following form.

\begin{theorem}\label{T}
Let $f: \Omega \times \R \times  \rn \to \R$ be a Carath\'eodory function satisfying either the structure assumption  \eqref{struct2} or \eqref{struct1}.  Suppose that the growth condition \eqref{ass1} holds for some Young functions $A$ and $B$ and a non-decreasing function $E$, such that $B, E \in \Delta_2$ near infinity. Assume that either condition \eqref{supercrit} is in force, or condition \eqref{subcrit} is in force and $B$ fulfills estimate  \eqref{ass:BAn1}. Moreover, assume that $E$ fulfills estimate \eqref{ass:EAn1}.
Then any local quasi-minimizer of the functional $\mathcal F$ in  \eqref{eq:int} is locally bounded in $\Omega$.
%
\end{theorem}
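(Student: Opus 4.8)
The plan is to reduce Theorem~\ref{T} to Theorem~\ref{Tbis} by absorbing the $u$-dependence into a perturbation of the $u$-free functional, and then to prove the $u$-free statement by a De~Giorgi-type iteration adapted to the Orlicz setting. I would begin with the harder, self-contained core, namely Theorem~\ref{Tbis}. Fix a ball $B_{R_0} \Subset \Omega$ and, for a local quasi-minimizer $u$, work on superlevel sets $A_{k,\rho} = \{x \in B_\rho : u(x) > k\}$ for $k \geq k_0$ and radii $\rho \in [R/2, R]$ with $B_R \subseteq B_{R_0}$. The standard first step is a Caccioppoli-type inequality: testing the quasi-minimality \eqref{min} with $\varphi = -\eta(u-k)_+$ for a cutoff $\eta$ between concentric balls, and using convexity \eqref{convex} together with the two-sided bound \eqref{ass1bis}, one obtains
\begin{equation*}
\int_{A_{k,\rho'}} A(|\nabla u|)\,dx \lesssim \int_{A_{k,\rho}} B\Big(\frac{u-k}{\rho-\rho'}\Big)\,dx + L\,|A_{k,\rho}|.
\end{equation*}
The appearance of $B$ rather than $A$ on the right is exactly the unbalanced difficulty; the point of assumption \eqref{ass:BAn1}, $B(t) \leq A_{n-1}(Lt)$, is that this term can be reabsorbed after a Sobolev embedding on spheres.

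The key structural step is the passage from solid integrals to spherical ones. Writing the left-hand side in polar coordinates and using the one-dimensional Orlicz–Sobolev–Poincar\'e inequality on $\Sn$ (whose relevant conjugate is $A_{n-1}$, and whose validity in the complementary regime \eqref{supercrit} already gives $L^\infty$ control and hence the theorem trivially), one controls $A_{n-1}$ of the spherical oscillation of $(u-k)_+$ by the tangential gradient, which in turn is dominated by $A(|\nabla u|)$ integrated over the sphere. Combining this with the Caccioppoli inequality yields, after integration in the radial variable and use of \eqref{ass:BAn1} to match the two occurrences of $A_{n-1}$, a reverse-type inequality of the form
\begin{equation*}
\int_{A_{k,\rho'}} A_{n-1}\Big(\frac{(u-k)_+}{c}\Big)\,dx \lesssim \Big(\frac{1}{\rho-\rho'}\Big)^{\sigma}\, \Phi\big(|A_{k,\rho}|\big)\,\int_{A_{k,\rho}} A_{n-1}\Big(\frac{(u-k)_+}{c}\Big)\,dx + \text{lower order},
\end{equation*}
with a gain encoded in the smallness of $|A_{k,\rho}|$ for large $k$. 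The final step is a De~Giorgi / Stampacchia iteration on the sequence $k_j = k_0(2-2^{-j})$ and $\rho_j = R(\tfrac12 + 2^{-j-1})$: the superlinearity of the Sobolev-conjugate $A_{n-1}$ relative to $A$ (which holds precisely because \eqref{divinf} is assumed and \eqref{convinf} excluded) provides the algebraic decay needed to run the iteration lemma and conclude $u \leq k_0 + C$ on $B_{R/2}$; the symmetric argument for $-u$ gives local boundedness. I expect the spherical embedding step — correctly identifying that $A_{n-1}$ is the sharp conjugate making \eqref{ass:BAn1} the right hypothesis, and handling the non-$\Delta_2$ lower function $A$ when manipulating Orlicz norms — to be the main technical obstacle.

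For the general functional \eqref{eq:int}, I would exploit the structure hypotheses to reduce to the previous case. Under the almost-monotonicity condition \eqref{struct2}, testing quasi-minimality with a truncation and using the second line of \eqref{struct2} lets one replace $f(x,u,\nabla u)$ on the superlevel set by $f(x,k,\nabla u)$ up to an error governed by $E(|u|)$; the bound \eqref{ass:EAn1}, $E(t) \leq A_n(Lt)$, together with the full-dimensional Orlicz–Sobolev inequality (with conjugate $A_n$) ensures that this error term is again of lower order in the iteration, since $A_n$ grows strictly faster than $A$ but the superlevel sets have small measure. Under the joint-convexity alternative \eqref{struct1}, one instead notes that $(t,\xi) \mapsto f(x,t,\xi)$ convex implies the function $\xi \mapsto f(x,k,\xi)$ inherits a two-sided bound of $A,B$-type after freezing $t=k$, and the comparison argument comparing $u$ with $\min\{u,k\}$ goes through directly. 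In both cases one then runs the same Caccioppoli–spherical-Sobolev–iteration scheme as above, with the extra $E(|u|)$-terms absorbed via \eqref{ass:EAn1}; the bookkeeping of these extra terms through the iteration is the only additional work beyond Theorem~\ref{Tbis}.
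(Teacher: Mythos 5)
Your high-level strategy is aligned with the paper's: use the optimal spherical Orlicz--Sobolev inequality with conjugate $A_{n-1}$ to match the upper bound \eqref{ass:BAn1}, then run a De~Giorgi iteration, and treat the $u$-dependence via the $n$-dimensional Sobolev conjugate $A_n$. However, there is a genuine gap in the central step: with a \emph{standard} cutoff $\eta$ (linear in $|x|$) as you propose, the subsequent polar-coordinate argument does not close.

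Here is the difficulty. After testing with $\varphi = -\eta^q(u-k)_+$, the term to be estimated is, in polar coordinates,
\[
\int_{\rho'}^{\rho} r^{n-1}\int_{\Sn} B\Big(\tfrac{(u_r-k)_+}{\rho-\rho'}\Big)\,d\HN\,dr.
\]
Applying the sharp spherical inequality \eqref{ineq:sobsphere} on each sphere requires, because Young functions are non-homogeneous, normalizing $(u_r-k)_+$ by the factor $\kappa F_r^{1/(n-1)}$, where $F_r=\int_{\Sn} A(|u_r-k|)+A(|\nabla_{\Sn}(u_r-k)_+|)\,d\HN$ is the \emph{slice-wise} energy. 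Using the $\Delta_2$-property of $B$ to move the scaling out, one arrives at an integrand of the form $\Phi_q\big(\tfrac{c\,F_r^{1/(n-1)}}{\rho-\rho'}\big)\,F_r$, which is \emph{superlinear} in $F_r$. Integrating this over $r$ cannot be controlled by $\int_{\rho'}^{\rho}F_r\,dr$ (the solid energy) because $F_r$ can concentrate at particular radii; Fubini gives an averaged bound, not a pointwise one. This is exactly the obstruction that the paper resolves in Lemma~\ref{L:optim}: instead of a fixed linear cutoff, it constructs an \emph{adaptive} cutoff whose gradient is supported only on the ``good'' set $U\subset[\rho,\sigma]$ of radii where both $\int_{\Sn}A(|\nabla_{\Sn}u_r|)$ and $\int_{\Sn}A(|u_r|)$ are bounded by four times their radial average; by a Chebyshev/Fubini count, $|U|\geq\tfrac12(\sigma-\rho)$, and on $U$ the slice energies are pointwise under control. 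Without this (or an equivalent device) the superlinear term blows up at bad radii, and the Caccioppoli--Sobolev step does not reduce to a single power of the solid energy as your write-up presumes.

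Two smaller points you have not addressed: the hypotheses of Theorem~\ref{T} are stated only near infinity, so one must first regularize $A$, $B$, $E$ near the origin (Lemma~\ref{L:widehateE} in the paper) to have global versions of \eqref{ass:BAn1}, \eqref{ass:EAn1} and the $\Delta_2$-bound $B(\lambda t)\leq\lambda^q B(t)$, which are used throughout; and the starting smallness $J(K/2,\tfrac34)\to 0$ as $K\to\infty$ needs a separate argument (the paper's Step~4, using the Poincar\'e inequality \eqref{SP} and dominated convergence), which is not purely formal when $A\notin\Delta_2$. These are routine but cannot be omitted. The central missing idea is the optimized, solution-dependent cutoff construction.
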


Our approach to  Theorems \ref{Tbis} and \ref{T} follows along the lines of De Giorgi's regularity result for linear equations with merely measurable coefficients, on which, together with Moser's iteration technique, all available proofs of the local boundedness of local solutions to variational problems or elliptic equations are virtually patterned. The main novelties in the present framework amount to the use of sharp Poincar\'e and Sobolev inequalities in Orlicz spaces and to an optimized form of the Caccioppoli-type inequality. The lack of homogeneity of non-power type Young functions results in Orlicz-Sobolev inequalities whose integral form necessarily involves a gradient term on both sides. This creates new difficulties, that also appear, again because of the non-homogeneity of Young functions, in deriving the optimized Caccioppoli inequality. The latter requires an ad hoc
 process in the choice of trial functions in the definition of quasi-minimizers. The advantage of the use of the relevant Caccioppoli inequality is that its proof only calls into play
  Sobolev-type inequalities on $(n-1)$-dimensional spheres, instead of $n$-dimensional balls. This allows for growths of the function $B$ dictated by the $(n-1)$-dimensional Sobolev conjugate of $A$. By contrast, a more standard choice of trial functions would only permit slower growths of $B$, not exceeding the $n$-dimensional Sobolev conjugate of $A$. Orlicz-Sobolev and Poincar\'e inequalities in dimension $n$ just come into play in the proof of Theorem \ref{T}, when estimating terms depending on the variable $u$.
The trial function optimization strategy is reminiscent of that used in diverse settings in recent years. The version exploited in \cite{HS21} -- a variant of \cite{BS19a} --
 to deal with functionals subject to $p,q$-growth conditions is sensitive to the particular growth of the integrand. The conditions imposed in the situation under consideration here are so general to force us to  resort to a 
more robust optimization argument, implemented in Lemma~\ref{L:optim}, Section \ref{sec:proof}. The latter is inspired to constructions employed in \cite{BC16} in the context of div-curl lemmas, and in \cite{KRS23} in the proof of absence of Lavrientiev-phenomena in vector-valued convex minimization problems.

\smallskip
\par
We conclude this section by illustrating Theorems \ref{Tbis} and \ref{T} with applications to a  couple of special instances. The former corresponds to functionals with $p,q$-growth. It not only recovers the available results but also augments and extends them in some respects. The latter concerns functionals with \lq\lq power-times-logarithmic'' growths, and provides us with an example associated with genuinely non-homogenous Young functions. 

\begin{example}\label{ex1}{\rm  In the standard case when
$$A(t)=t^p,$$ with $1\leq p \leq n$, Theorem \ref{Tbis} recovers a result of \cite{HS21}. Indeed, if $n \geq 3$ and $1 \leq p < n-1$, we have that  $A_{n-1}(t) \approx t^{\frac{(n-1)p}{(n-1)-p}}$, and assumption \eqref{ass:BAn1} is equivalent to 
\begin{equation}\label{qcond}
B(t) \lesssim t^\frac{(n-1)p}{(n-1)-p} \quad \text{near infinity.}
\end{equation}
Here, the relations $\lesssim$ and $\approx$ mean domination and equivalence, respectively, in the sense of Young functions.
\\ If $p=n-1$, then $A_{n-1}(t) \approx e^{t^{\frac {n-1}{n-2}}}$ near infinity, whereas if $p>n-1$, then the second alternative condition \eqref{supercrit} is satisfied.
 Hence, if either $n=2$ or $n\geq 3$ and  $p \geq n-1$, then any Young function $B \in \Delta_2$ near infinity is admissible.
\\ Condition \eqref{qcond} is sharp, since the functionals with $p,q$-growth  exhibited in \cite{G87, H92, Marc_prepr, Mar91} admit unbounded local minimizers if assumption \eqref{qcond} is dropped. 
\\ Let us point out that the result deduced from Theorem \ref{Tbis} also enhances that of  \cite{HS21}, where the function $\xi\mapsto f(x,\xi)$ is assumed to fulfil a variant of the  $\Delta_2$-condition, which is not imposed here.
\\ On the other hand, Theorem \ref{T} extends the result of  \cite{HS21}, where integrands only depending on $x$ and $\nabla u$  are  considered. The conclusion of Theorem \ref{T} hold under the same bound  \eqref{qcond} on the function $B$. Moreover, $A_{n}(t) \approx t^{\frac{np}{n-p}}$ if $1 \leq p <n$ and $A_{n}(t) \approx e^{t^{\frac {n}{n-1}}}$ near infinity if $p=n$. Hence, if $1 \leq p <n$, then assumption \eqref{ass:EAn1} reads:
$$E(t) \lesssim t^\frac{np}{n-p} \quad  \text{near infinity.}$$
If $p=n$, then any non-decreasing function $E$ satisfying the   $\Delta_2$-condition near infinity satisfies assumption \eqref{ass:EAn1}, and it is therefore admissible.
}

\end{example}

\begin{example}\label{ex2}{\rm  Assume that
$$A(t)\approx t^p (\log t)^\alpha \quad \text{near infinity,}$$ 
where  $1< p < n$ and $\alpha \in \R$, or $p=1$ and $\alpha \geq 0$, or $p=n$ and $\alpha \leq n-1$.  Observe that these restrictions on the exponents $p$ and $\alpha$ are required for $A$ to be a Young function fulfilling condition \eqref{divinf}.
From an application of Theorem \ref{T} one can deduce that any local minimizer of $\mathcal F$ is locally bounded under the following assumptions. ,
\\ If $n \geq 3$ and $p<n-1$,  then we have to require that
$$B(t) \lesssim  t^\frac{(n-1)p}{(n-1)-p}  (\log t)^\frac{(n-1)\alpha}{(n-1)-p} \quad \text{near infinity.}$$
If either $n=2$, or $n \geq 3$ and   $n-1\leq p <n$, then any Young function $B \in \Delta_2$ near infinity is admissible.
\\ Moreover, if  $p<n$, then our assumption on $E$ takes the form:
$$E(t) \lesssim  t^\frac{np}{n-p}   (\log t)^ \frac {n\alpha }{n-p} \quad \text{near infinity.}$$
If $p=n$, then any non-decreasing  function $E \in \Delta_2$ near infinity is admissible.

}

\end{example}

\section{Orlicz-Sobolev spaces}

This section is devoted to some basic definitions and properties from the theory of Young functions and Orlicz spaces. We refer the reader to the monograph \cite{RR} for a comprehensive presentation of this theory. The Sobolev and Poincar\'e inequalities in Orlicz-Sobolev spaces that play a role in our proofs are also recalled.

Orlicz spaces are defined in terms of  Young functions. 
A
function $A: [0, \infty ) \to [0, \infty ]$ is called a Young
function if it is convex (non trivial), left-continuous and
 $A(0)=0$. 
\\ The convexity of $A$ and its vanishing at $0$ imply that
\begin{equation}\label{Ak}
\lambda A(t) \leq A(\lambda t) \quad \hbox{for $\lambda \geq 1$ and $t \geq0$,}
\end{equation}
and that the function
\begin{equation}\label{monotone}
\frac{A(t)}t \quad \text{is non-decreasing in $(0, \infty)$.}
\end{equation}
The Young conjugate $\widetilde{A}$ of $A$  is defined by
$$\widetilde{A}(t) = \sup \{\tau t-A(\tau ):\,\tau \geq 0\} \qquad {\rm for}\qquad  t\geq 0\,.$$
The following inequalities hold:
\begin{equation}\label{AAtilde}
s \leq A^{-1}(s) \widetilde A^{-1}(s) \leq 2s \qquad \hbox{for $ s
\geq 0$,}
\end{equation}
where $A^{-1}$ and $\widetilde A^{-1}$ denote the generalized
right-continuous inverses of $A$ and $\widetilde A$, respectively.
		\\
A Young function $A$ is said to satisfy the $\Delta_2$-condition globally --  briefly $A \in \Delta_2$ globally -- if there exists a constant $c$ such that
\begin{equation}\label{delta2}
A(2t) \leq c A(t) \quad \text{for $t \geq 0$.}
\end{equation}
If inequality \eqref{delta2} just holds for $t\geq t_0$ for some $t_0>0$, then we say that $A$   satisfies the $\Delta_2$-condition near infinity, and write $A \in \Delta_2$ near infinity.
One has that 
\begin{equation}\label{equivdelta2}
\text{$A \in \Delta_2$ globally [near infinity] if and only if there exists $q\geq 1$ such that $\frac{tA'(t)}{A(t)} \leq q$ for a.e. $t>0$ [$t \geq t_0$].}
\end{equation}

\par\noindent
A  Young function $A$ is said to dominate another Young function $B$
globally   if there exists a positive constant $c$  such that
\begin{equation}\label{B.5bis}
B(t)\leq A(c t) 
\end{equation}
for $t \geq 0$. The function $A$ is said to dominate $B$ near infinity if there
exists $t_0\geq 0$ such that \eqref{B.5bis} holds for $t \geq t_0$. If $A$ and $B$ dominate each other globally [near infinity], then they are called equivalent globally [near infinity]. 
We use the notation 
$B \lesssim A$ to denote that $A$ dominates $B$, and $B \approx A$ to denote that $A$ and  $B$ are equivalent.
This terminology and notation will also be adopted for merely nonnegative functions, which are not necessarily Young functions.
\par
Let $\Omega$ be a measurable set in $\rn$.
The Orlicz class $K^A(\Omega)$ built upon a Young function $A$ is defined as 
\begin{equation}\label{ka}
K^A(\Omega)= \bigg\{u: \text{$u$ is measurable in $\Omega$ and $\int_\Omega A(|u|)\,dx<\infty$}\bigg\}.
\end{equation}
The set $K^A(\Omega)$ is convex for every Young function $A$. 
\\ The Orlicz
space $L^A (\Omega)$ is the linear hull of $K^A(\Omega)$. It is a Banach function space, equipped with the
 Luxemburg norm defined as
\begin{equation}\label{lux}
 \|u\|_{L^A(\Omega )}= \inf \left\{ \lambda >0 :  \int_{\Omega }A
\left( \frac{|u|}{\lambda} \right) dx  \leq 1 \right\}
\end{equation}
for a measurable function $u$.
These notions are modified as usual to define the local Orlicz class $K^A_{\rm loc}(\Omega)$ and the local Orlicz space $L^A_{\rm loc}(\Omega)$.
\\ If  either $A\in \Delta_2$ globally, or $|\Omega|<\infty$ and $A\in \Delta_2$ near infinity, then $K^A(\Omega)$ is, in fact, a linear space, and  $K^A(\Omega)=L^A(\Omega)$. Here, 
$|\Omega|$ denotes the Lebesgue measure of $\Omega$.
\\
 Notice that, in particular, $L^A (\Omega)= L^p (\Omega)$ if $A(t)= t^p$ for some
$p \in [1, \infty )$, and $L^A (\Omega)= L^\infty (\Omega)$ if $A(t)=0$ for
$t\in [0, 1]$ and $A(t) = \infty$ for $t\in (1, \infty)$.
\\ The identity
\begin{equation}\label{aug50}
\|\chi_E\|_{L^A(\Omega)} = \frac 1{A^{-1}(1/|E|)}
\end{equation}
holds
for every Young function $A$ and any measurable set $E\subset \Omega$. Here, $\chi_E$ stands for the characteristic function of $E$.
  \\
The H\"older inequality in Orlicz spaces tells us that
\begin{equation}\label{holder}
\int _\Omega |u v|\,dx  \leq 2\|u\|_{L^A(\Omega )}
\|v\|_{L^{\widetilde A}(\Omega )}
\end{equation}
for $u \in L^A(\Omega )$ and $v\in L^{\widetilde A}(\Omega )$.

Assume now that $\Omega$ is an open set.  The  homogeneous Orlicz-Sobolev class $V^1K^A(\Omega)$ is defined as the convex set
\begin{align}\label{k1a}
V^1K^A(\Omega) = \big\{u\in W^{1.1}_{\rm loc}(\Omega):\,  |\nabla u|\in K^A(\Omega)\big\}
 \end{align}
and the inhomogeneous Orlicz-Sobolev class $W^1K^A(\Omega)$ is the convex set
\begin{align}\label{wk1a}
W^1K^A(\Omega) =K^A(\Omega) \cap V^1K^A(\Omega).
 \end{align}
The homogenous Orlicz-Space $V^1L^A(\Omega)$ and its inhomogenous counterpart $W^1L^A(\Omega)$ are accordingly given by
\begin{align}\label{v1a}
V^1L^A(\Omega) =  \big\{u\in W^{1.1}_{\rm loc}(\Omega):\,  |\nabla u|\in L^A(\Omega)\big\}
%
 \end{align}
and 
\begin{align}\label{w1a}
W^1L^A(\Omega) = L^A(\Omega)\cap V^1L^A(\Omega).
 \end{align}
The latter is a Banach space endowed with the norm
\begin{equation}\label{w1anorm}
\|u\|_{W^{1,A}(\Omega)} =  \|u \|_{L^A(\Omega)} + \|\nabla u \|_{L^A(\Omega)}.
\end{equation}
Here, and in what follows, we use the notation $\|\nabla u \|_{L^A(\Omega)}$ as a shorthand for $\|\, |\nabla u|\, \|_{L^A(\Omega)}$. 
\\ The local versions $V^1_{\rm loc} K^A(\Omega)$,  $W^1_{\rm loc} K^A(\Omega)$, $V^1_{\rm loc} L^A(\Omega)$, and $W^1_{\rm loc} L^A(\Omega)$ of these sets/spaces is obtained by modifying the above definitions as usual.
 In the case when $L^A(\Omega) =L^p(\Omega)$ for some $p \in [1, \infty]$,  the standard Sobolev space $W^{1,p}(\Omega)$ and its homogeneous version
$V^{1,p}(\Omega)$ are recovered.
\\ Orlicz and Orlicz-Sobolev classes of  weakly differentiable functions $u$ defined  on the   $(n-1)$-dimensional unit sphere $\mathbb S^{n-1}$ in $\R^n$ also enter our approach. These spaces are defined as in \eqref{ka}, \eqref{lux}, \eqref{k1a}, \eqref{v1a}, and \eqref{w1a}, with the Lebesgue measure replaced with the  $(n-1)$-dimensional Hausdorff measure $\mathcal H^{n-1}$, and $\nabla u$ replaced with 
$\nabla_{\mathbb S} u$, the vector  field on $ \mathbb S^{n-1}$ whose components are the covariant derivatives of $u$

As highlighted in the previous section, sharp embedding theorems and corresponding inequalities in Orlicz-Sobolev spaces play a critical role in the formulation of our result and in its proof. 
 As shown in \cite{cianchi_CPDE} (see also \cite{cianchi_IUMJ}  for an equivalent version), the optimal $n$-dimensional Sobolev conjugate of a Young function $A$ fulfilling
 \begin{equation}\label{conv0}
\int_0\biggl(\frac{t}{A(t)}\biggr)^\frac{1}{n-1}\,dt <\infty
\end{equation} 
   is the Young function $A_n$ defined as 
\begin{equation}\label{An}
A_n(t) = A(H_n^{-1}(t)) \qquad \text{for $t \geq 0$,}
\end{equation}
where the function $H_n : [0, \infty) \to [0,\infty)$ is given by
\begin{equation}\label{Hn}
H_n(s)=\biggl(\int_0^s\biggl(\frac{t}{A(t)}\biggr)^\frac{1}{n-1}\,dt\biggr)^\frac{n-1}{n}\qquad\mbox{for $s\geq0$.}
\end{equation}
The function $A_{n-1}$ is defined analogously, by replacing $n$ with $n-1$ in equations \eqref{An} and \eqref{Hn}.
\\ In the statements of Theorems \ref{Tbis} and \ref{T}, the functions $A_n$,and $A_{n-1}$ are defined after modifying $A$ near $0$, 
if necessary, in such a way that condition \eqref{conv0} be satisfied. 
Assumptions \eqref{ass1bis} and \eqref{ass1} are not affected by the choice of the modified function $A$, thanks to the presence of the additive constant $L$. Membership of a function in an Orlicz-Sobolev local class or space associated with $A$ is also not influenced by this choice, inasmuch as the behavior of $A$ near $0$ is irrelevant (up to additive and/or multiplicative constants) whenever integrals or norms over sets with finite measure are concerned.
\\
An optimal Sobolev-Poincar\'e inequality on balls $\mathbb B_r \subset \rn$, centered at $0$ and with radius $r$ reads as follows.  In its statement, we adopt the notation
$$u_{\mathbb B_r}= \fint_{\mathbb B_r} u(x)\, dx,$$
where $\fint$ stands for integral average.

\begin{theoremalph}\label{poincare}
Let $n\geq2$, let $r>0$, and let $A$ be a Young function fulfilling condition \eqref{conv0}.
Then, there exists a constant $\kappa = \kappa (n)$ such that
\begin{equation}\label{SP}
\int_{\mathbb B_r}A_n\Bigg(\frac{|u- u_{\mathbb B_r}|}{\kappa\big(\int_{\mathbb B_r}A(|\nabla u|)dy\big)^{\frac 1n} }\Bigg)\,dx\leq \int_{\mathbb B_r}A(|\nabla u|)\,dx
\end{equation}
for every $u \in V^1K^A(\mathbb B_r)$.
\end{theoremalph}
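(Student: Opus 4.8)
The plan is to reduce the $n$-dimensional inequality on $\mathbb B_r$ to the one-dimensional level-set information carried by the isoperimetric inequality, following the classical symmetrization approach of Talenti–type but in the Orlicz setting, exactly as in \cite{cianchi_CPDE}. First, by scaling $x \mapsto x/r$ and replacing $A$ by a constant multiple (which leaves \eqref{conv0} and the structure of $A_n$ essentially unchanged, since only the behavior of $A$ matters up to the constant $\kappa$), it suffices to treat the unit ball and, by density and truncation, to assume $u \in V^1K^A(\mathbb B_1) \cap L^\infty$ with $u_{\mathbb B_1}=0$ after subtracting the mean. The core object is the distribution function $\mu(s) = |\{x \in \mathbb B_1 : |u - u_{\mathbb B_1}| > s\}|$ and the decreasing rearrangement $u^*$. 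By the coarea formula and the Euclidean isoperimetric inequality, for a.e.\ $s$ one has $n\omega_n^{1/n}\mu(s)^{1-1/n} \le -\mu'(s)\int_{\{|u-u_{\mathbb B_1}|=s\}} |\nabla u|^{-1}\,d\mathcal H^{n-1}$, which after an application of Jensen's inequality with the convex function $A$ yields the pointwise differential inequality
\begin{equation*}
A\!\left( \frac{n\omega_n^{1/n}\,\mu(s)^{1-1/n}}{-\mu'(s)} \right) \le \frac{1}{-\mu'(s)} \int_{\{|u-u_{\mathbb B_1}|=s\}} \frac{A(|\nabla u|)}{|\nabla u|}\,d\mathcal H^{n-1}.
\end{equation*}

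Next I would integrate this over $s$ and use the monotonicity \eqref{monotone} of $A(t)/t$ together with the coarea formula once more to bound the right-hand side by $\int_{\mathbb B_1} A(|\nabla u|)\,dx =: I$. The resulting inequality, after the change of variables $s = u^*(\tau)$ with $\tau = \mu(s)$, takes the form $\int_0^{|\mathbb B_1|} A\!\left( c_n\,\tau^{-1/n}\,(-u^{*\prime}(\tau))^{-1}\cdot(\text{something})\right)$ — more precisely it produces the standard rearrangement estimate that controls $\int_0^{\omega_n} A_n\big(|u^*(\tau)|/(\kappa I^{1/n})\big)\,d\tau$ in terms of $I$. The key computation is to recognize that the function $H_n$ in \eqref{Hn} is exactly the one that makes this estimate tight: writing $\phi(\tau) = u^*(\tau)$ one shows, using Fubini/Hardy-type manipulations and the definition $H_n(s) = \big(\int_0^s (t/A(t))^{1/(n-1)}dt\big)^{(n-1)/n}$, that $\int A\big(H_n^{-1}(|u^*(\tau)|/(\kappa I^{1/n}))\big)\,d\tau \le I$, and then invoke $A_n = A \circ H_n^{-1}$. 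Finally, the passage from the rearranged integral back to the original integral over $\mathbb B_1$ is just $\int_{\mathbb B_1} A_n(\cdot)\,dx = \int_0^{\omega_n} A_n(u^*(\tau))\,d\tau$ by equimeasurability, which closes the argument; one tracks constants to see that $\kappa$ depends only on $n$ (through $\omega_n$ and the isoperimetric constant).

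The main obstacle, and the only genuinely non-routine point, is the sharp reconstruction step: proving that the particular Young function $A_n$ defined via $H_n$ in \eqref{An}–\eqref{Hn} is the correct (and optimal) target, i.e.\ that the one-dimensional weighted inequality coming from rearrangement is saturated precisely by $H_n^{-1}$. This is where the convexity of $A$, the monotonicity of $A(t)/t$, and the precise exponent $1/(n-1)$ must all be used in concert; a crude bound would give a weaker conjugate. Everything else — the isoperimetric/coarea input, Jensen, the density and truncation reductions, and the scaling to $\mathbb B_r$ — is standard. Since this sharp inequality is established in \cite{cianchi_CPDE} (with an equivalent formulation in \cite{cianchi_IUMJ}), I would present the proof as an application of that result, reproducing the rearrangement skeleton above and referring to \cite{cianchi_CPDE} for the optimality of $H_n$ and the verification that $A_n$ is itself a Young function.
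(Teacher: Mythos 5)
The paper does not actually prove Theorem A; it records it as a direct special case of \cite[Theorem 4.4]{cianchi_Forum} (noting only that the constant's dependence on $r$ follows by scaling). Your sketch reconstructs the symmetrization argument underlying that reference and \cite{cianchi_CPDE,cianchi_ibero}, so you are supplying the background to the citation rather than proposing a genuinely different route.

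Two of the steps as you wrote them would, however, need repair before they constitute a derivation. First, the isoperimetric input: for a function on $\mathbb B_1$ with mean zero (but no compact support) the level set $\{|u-u_{\mathbb B_1}|>s\}$ can reach $\partial\mathbb B_1$, so the Euclidean estimate $\mathcal H^{n-1}(\partial^* E)\geq n\omega_n^{1/n}|E|^{1-1/n}$ is not available. One must instead invoke the relative isoperimetric inequality $\mathcal H^{n-1}(\partial^* E\cap\mathbb B_1)\geq c(n)\min(|E|,|\mathbb B_1\setminus E|)^{1-1/n}$ and correspondingly subtract a \emph{median} of $u$, not the mean, so that the minimum is effective; the passage from median to mean deviation is then a standard two-line comparison. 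This is exactly how the paper treats the companion sphere inequality (Theorem~\ref{sobolev}, Part (i)), where $u^\circ(c_n/2)$ is the median and inequality \eqref{aug55} is the median-to-mean step; the same maneuver is needed on the ball. Second, the modular form \eqref{SP}: the symmetrization plus one-dimensional Hardy reduction most cleanly yields the Luxemburg-norm inequality $\|u-u_{\mathbb B_1}\|_{L^{A_n}}\leq c\|\nabla u\|_{L^A}$ with $c=c(n)$; to pass to the integral form \eqref{SP} one applies the norm inequality to $A_M(t)=A(t)/M$ with $M=\int_{\mathbb B_1}A(|\nabla u|)$ and uses $(A_M)_n(t)=M^{-1}A_n(t/M^{1/n})$, precisely as the paper does for the sphere in equations \eqref{july11}--\eqref{july13}. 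Jumping straight from your pointwise differential inequality to the modular estimate, without this $A_M$-rescaling step, is where a careless reader would lose track of the constant and where the optimality of $H_n$ actually enters. With these two corrections your plan matches the cited argument.
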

As a consequence of inequality \eqref{SP} and of Lemma \ref{ineqAAn}, Section \ref{sec:prelim}, the following inclusion holds:
\begin{equation}\label{sep21}
V^1_{\rm loc}K^A(\Omega) \subset K^A_{\rm loc}(\Omega)
\end{equation}
for any 
 open set  $\Omega \subset \rn$ and any Young function $A$.  Thereby, 
 $$V^1_{\rm loc}K^A(\Omega)=W^1_{\rm loc}K^A(\Omega).$$
 Hence, in what follows, the spaces $V^1_{\rm loc}K^A(\Omega)$ and $W^1_{\rm loc}K^A(\Omega)$ will be equally used.

  Besides the Sobolev-Poincar\'e inequality of Theorem \ref{poincare}, a
Sobolev type inequality is of use in our applications and  is the subject of the following theorem. Only Part (i) of the statement will be needed. Part (ii) substantiates inclusion \eqref{sep30}.

\begin{theoremalph}\label{T:Sobolevn}
Let $n\geq2$, let $r>0$, and let $A$ be a Young function fulfilling condition \eqref{conv0}.
\begin{itemize}
\item[(i)] 
 Assume that condition \eqref{divinf} holds. Then, there exists a constant $\kappa=\kappa(n,r)$ such that
\begin{equation}\label{est:sobolevn}
\int_{\mathbb B_r}A_n\Bigg(\frac{|u|}{\kappa\big(\int_{\mathbb B_r}A(|u|)+A(|\nabla u|)dy\big)^\frac1n}\Bigg)\,dx\leq \int_{\mathbb B_r}A(|u|)+A(|\nabla u|)\,dx
\end{equation}
for every $u \in W^1K^A(\mathbb B_r)$.
\item[(ii)]  Assume that condition \eqref{convinf} holds. Then, there exists a constant $\kappa=\kappa(n,r,A)$ such that
\begin{equation}\label{est:sobolevninf}
\|u\|_{L^\infty (\mathbb B_r)} \leq  \kappa \bigg(\int_{\mathbb B_r}A(|u|)+A(|\nabla u|)\,dx\bigg)^{\frac 1n}
\end{equation}
for every $u \in W^1K^A(\mathbb B_r)$.
\end{itemize}
In particular, if $r \in [r_1, r_2]$ for some $r_2>r_1>0$, then the constant $\kappa$ in  inequalities \eqref{est:sobolevn} and \eqref{est:sobolevninf} depends on $r$ only via $r_1$ and $r_2$.
\end{theoremalph}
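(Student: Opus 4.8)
The plan is to deduce both parts from the Sobolev--Poincar\'e inequality \eqref{SP} of Theorem~\ref{poincare}; the only additional ingredients are the convexity of the Young function $A_n$, Jensen's inequality, and the downward monotonicity of $t\mapsto t/A(t)$ coming from \eqref{monotone} (under \eqref{conv0} this ratio is finite-valued on $(0,\infty)$). I would fix $u\in W^1K^A(\mathbb B_r)$, set $I=\int_{\mathbb B_r}\big(A(|u|)+A(|\nabla u|)\big)\,dx$ and $D=I^{1/n}$, and assume $0<I<\infty$ (if $I=\infty$ there is nothing to prove, and \eqref{conv0} forces $A>0$ on $(0,\infty)$, so $I=0$ gives $u\equiv0$). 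Since $u-u_{\mathbb B_r}\in V^1K^A(\mathbb B_r)$ and $\nabla(u-u_{\mathbb B_r})=\nabla u$, inequality \eqref{SP} yields, with $\kappa_0=\kappa_0(n)$,
\[
\int_{\mathbb B_r}A_n\left(\frac{|u-u_{\mathbb B_r}|}{\kappa_0\big(\int_{\mathbb B_r}A(|\nabla u|)\,dy\big)^{1/n}}\right)dx\le\int_{\mathbb B_r}A(|\nabla u|)\,dx\le I,
\]
and, since $\big(\int_{\mathbb B_r}A(|\nabla u|)\,dy\big)^{1/n}\le D$ and $A_n$ is non-decreasing, enlarging the denominator to $\kappa_0 D$ only decreases the integrand, so $\int_{\mathbb B_r}A_n\big(|u-u_{\mathbb B_r}|/(\kappa_0 D)\big)\,dx\le I$.

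To handle the average, I would set $s=|\mathbb B_r|^{-1}\int_{\mathbb B_r}A(|u|)\,dx\le I/|\mathbb B_r|=D^n/|\mathbb B_r|$; Jensen's inequality gives $A(|u_{\mathbb B_r}|)\le A\big(|\mathbb B_r|^{-1}\int_{\mathbb B_r}|u|\,dx\big)\le s$, so $|u_{\mathbb B_r}|\le A^{-1}(s)$. The crucial point is the inequality $A_n\big(A^{-1}(s)/s^{1/n}\big)\le s$: writing $\tau=A^{-1}(s)$, so $A(\tau)\le s$, the monotonicity of $t/A(t)$ gives $H_n(\tau)^{n/(n-1)}=\int_0^\tau(t/A(t))^{1/(n-1)}\,dt\ge\tau\,(\tau/A(\tau))^{1/(n-1)}$, hence $H_n(\tau)\ge\tau/A(\tau)^{1/n}\ge A^{-1}(s)/s^{1/n}$; applying the non-decreasing map $H_n^{-1}$ and then $A$ yields $A_n\big(A^{-1}(s)/s^{1/n}\big)=A\big(H_n^{-1}(A^{-1}(s)/s^{1/n})\big)\le A(\tau)\le s$ (this is essentially the comparison recorded in Lemma~\ref{ineqAAn}). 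Since $s^{1/n}\le D\,|\mathbb B_r|^{-1/n}$, setting $\kappa_1=|\mathbb B_r|^{-1/n}$ and using that $A_n$ is non-decreasing we get $A_n\big(|u_{\mathbb B_r}|/(\kappa_1 D)\big)\le s$, and, the integrand being constant, $\int_{\mathbb B_r}A_n\big(|u_{\mathbb B_r}|/(\kappa_1 D)\big)\,dx\le|\mathbb B_r|\,s=\int_{\mathbb B_r}A(|u|)\,dx\le I$.

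Now I would take $\kappa=2\max\{\kappa_0,\kappa_1\}$; since $|\mathbb B_r|=\omega_nr^n$, this depends only on $n$ and $r$ — indeed only on $n$ and $1/r$ — and hence is bounded in terms of $n,r_1,r_2$ when $r\in[r_1,r_2]$ (enlarging $\kappa$ only weakens the conclusion, so a value uniform on $[r_1,r_2]$ may be used). From $|u|\le|u-u_{\mathbb B_r}|+|u_{\mathbb B_r}|$, the convexity of $A_n$ (with $A_n(0)=0$), and $\kappa/2\ge\kappa_0,\kappa_1$,
\[
A_n\Big(\frac{|u|}{\kappa D}\Big)\le\tfrac12A_n\Big(\frac{2|u-u_{\mathbb B_r}|}{\kappa D}\Big)+\tfrac12A_n\Big(\frac{2|u_{\mathbb B_r}|}{\kappa D}\Big)\le\tfrac12A_n\Big(\frac{|u-u_{\mathbb B_r}|}{\kappa_0 D}\Big)+\tfrac12A_n\Big(\frac{|u_{\mathbb B_r}|}{\kappa_1 D}\Big).
\]
Integrating over $\mathbb B_r$ and using the two bounds from the previous paragraphs gives $\int_{\mathbb B_r}A_n\big(|u|/(\kappa D)\big)\,dx\le\tfrac12I+\tfrac12I=I$, which is precisely \eqref{est:sobolevn} because $\kappa D=\kappa\big(\int_{\mathbb B_r}A(|u|)+A(|\nabla u|)\,dy\big)^{1/n}$.

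For part (ii) I would note that the entire argument used only \eqref{conv0} — through Theorem~\ref{poincare} — and never \eqref{divinf}, so the modular estimate $\int_{\mathbb B_r}A_n\big(|u|/(\kappa D)\big)\,dx\le I$ persists under \eqref{convinf}. In that regime $H_n(\infty)=:H_n^\infty<\infty$, so $H_n^{-1}(t)=+\infty$ and thus $A_n(t)=+\infty$ for $t\ge H_n^\infty$; since $I<\infty$, the estimate forces $|u|/(\kappa D)<H_n^\infty$ a.e., i.e.\ $\|u\|_{L^\infty(\mathbb B_r)}\le\kappa H_n^\infty\big(\int_{\mathbb B_r}A(|u|)+A(|\nabla u|)\,dy\big)^{1/n}$, which is \eqref{est:sobolevninf} with $\kappa(n,r,A)=\kappa H_n^\infty$ (again uniform on $[r_1,r_2]$ via $r_1,r_2$). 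The only genuinely delicate point in the whole scheme is the sharp comparison $A_n\big(A^{-1}(s)/s^{1/n}\big)\le s$ used for the average term; the rest is bookkeeping, in particular tracking how the volume $|\mathbb B_r|$ enters, which is exactly what makes $\kappa$ depend on $r$ and produces the extra term $\int_{\mathbb B_r}A(|u|)\,dx$ on the right-hand side that is absent from the zero-average inequality \eqref{SP}.
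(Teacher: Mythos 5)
Your argument is correct and is a genuine proof, not merely a sketch. The paper itself does not prove Theorem~\ref{T:Sobolevn}: it only cites it, together with Theorem~\ref{poincare}, as a special case of \cite[Theorems~4.4 and~3.1]{cianchi_Forum}, with the $r$-dependence of the constant deferred to ``a standard scaling argument.'' What you do instead is derive Theorem~\ref{T:Sobolevn} from Theorem~\ref{poincare} plus two elementary facts about Young functions, making the paper less reference-dependent. Structurally this mirrors what the paper \emph{does} prove in detail, namely the sphere analogue Theorem~\ref{sobolev}(i): start from a zero-average Poincar\'e estimate, control the mean by an Orlicz--H\"older/Jensen bound, and recombine by a triangle-type inequality. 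There the recombination is done at the level of Luxemburg norms followed by a rescaling of the Young function to pass to modular form; since Theorem~\ref{poincare} is already in modular form, you bypass this step by splitting via convexity of $A_n$ with the factor $\kappa=2\max\{\kappa_0,\kappa_1\}$, which is tidier. Your key pointwise estimate $A_n\bigl(A^{-1}(s)/s^{1/n}\bigr)\le s$ is exactly inequality \eqref{july7} in the proof of Lemma~\ref{ineqH} (your attribution to Lemma~\ref{ineqAAn} is slightly off, but the derivation is correct); you obtain it by the one-line bound $\int_0^\tau(t/A(t))^{1/(n-1)}\,dt\ge\tau(\tau/A(\tau))^{1/(n-1)}$ from the monotonicity \eqref{monotone} of $t/A(t)$, where the paper uses a H\"older argument yielding the same conclusion. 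The treatment of part~(ii) by observing that the modular bound alone forces $|u|/(\kappa D)<H_n(\infty)$ a.e.\ is clean and correct, and the dependence $\kappa_1=|\mathbb B_r|^{-1/n}$ makes the claimed uniformity on $[r_1,r_2]$ transparent. One minor point worth flagging explicitly if this were written out: the case $\int_{\mathbb B_r}A(|\nabla u|)\,dx=0$ must be disposed of separately (forcing $u$ constant, so the zero-average term vanishes) before dividing by it in \eqref{SP}; you implicitly handled the cases $I=0,\infty$ but not this intermediate degenerate one.
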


A counterpart of Theorem \ref{T:Sobolevn} for Orlicz-Sobolev functions on the sphere $\mathbb S^{n-1}$ takes the following form.

{\color{black}
%
\begin{theoremalph}\label{sobolev} Let $n\geq 2$ and let $A$ be a Young function such that
\begin{equation}\label{conv0n-1}
  \int_0\bigg(\frac{t}{A(t)}\bigg)^\frac1{n-2}\,dt<\infty
\end{equation}
if $n\geq3$.
\begin{itemize}
\item[(i)] Assume that  $n \geq 3$ and 
\begin{equation}\label{divn-1}
 \int^\infty \bigg(\frac{t}{A(t)}\bigg)^\frac1{n-2}\,dt=\infty.
\end{equation}
Then, there exists a constant $\kappa=\kappa(n)$ such that 
\begin{equation}\label{ineq:sobsphere}
\int_{\mathbb S^{n-1}}A_{n-1}
\biggl(\frac{ |u|}{\kappa\big(\int_{\mathbb S^{n-1}}A(|u|)+ A(|\nabla_{\mathbb S} u|)d\mathcal H^{n-1}(y)\big)^\frac1{n-1}}\biggr)\,d\mathcal H^{n-1}(x)
\leq \int_{\mathbb S^{n-1}} A(|u|)+ A(|\nabla_{\mathbb S} u|)\,d\mathcal H^{n-1}(x)
\end{equation}
for $u\in W^{1}K^A(\mathbb S^{n-1})$.
\item[(ii)] Assume that one of the following situations occurs:
\begin{equation}\label{convn-1}
\begin{cases}
 n=2 & \quad \text{and}\quad  \lim_{t\to 0^+}\frac{A(t)}t>0 
 \\ \\
  n\geq 3 & \quad \text{ and} \quad \displaystyle  \int^\infty \bigg(\frac{t}{A(t)}\bigg)^\frac1{n-2}\,dt<\infty.
\end{cases}
\end{equation}
Then, there exists a constant $\kappa=\kappa(n,A)$ such that 
\begin{equation}\label{ineq:sobsphereinf}
\|u\|_{L^\infty(\mathbb S^{n-1})}\leq \kappa \biggl(\int_{\mathbb S^{n-1}} A(|u|)+A(|\nabla_{\mathbb S} u|)\,d\mathcal H^{n-1}(x)\biggr)^\frac1{n-1}
\end{equation}
for $u\in W^{1}K^A(\mathbb S^{n-1})$.
\end{itemize}
%
%
\end{theoremalph}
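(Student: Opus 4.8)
The plan is to deduce Theorem~\ref{sobolev} from its flat, $n$-dimensional analogue, Theorem~\ref{T:Sobolevn}, by a localization-and-patching argument on the sphere, exactly as one transfers Sobolev inequalities between a compact manifold and Euclidean space. First I would fix a finite atlas $\{U_j\}_{j=1}^N$ of $\mathbb S^{n-1}$ by geodesic balls, together with bi-Lipschitz charts $\Phi_j\colon V_j\subset\mathbb R^{n-1}\to U_j$ and a smooth partition of unity $\{\eta_j\}$ subordinate to $\{U_j\}$. Given $u\in W^1K^A(\mathbb S^{n-1})$, each piece $\eta_j u$ pulls back under $\Phi_j$ to a function in $W^1K^A(V_j)$ whose modular norms of $|v|$ and $|\nabla v|$ are comparable, up to multiplicative constants depending only on $n$ and the fixed atlas, to the corresponding spherical modulars of $\eta_j u$ and $\nabla_{\mathbb S}(\eta_j u)$; here one uses that $|\nabla_{\mathbb S}(\eta_j u)|\le |\nabla_{\mathbb S}\eta_j|\,|u| + |\nabla_{\mathbb S}u|$ together with \eqref{Ak} and the $\Delta_2$-freeness being irrelevant since we only compare modulars, not norms. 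Extending $v$ by zero and applying part~(i) of Theorem~\ref{T:Sobolevn} in dimension $n-1$ on a fixed ball $\mathbb B_r\supset\overline{V_j}$ — which is legitimate precisely because hypothesis \eqref{divn-1} is the instance of \eqref{divinf} with $n$ replaced by $n-1$, and \eqref{conv0n-1} is \eqref{conv0} in dimension $n-1$ — yields
\begin{equation*}
\int_{V_j}A_{n-1}\!\left(\frac{|v|}{\kappa\big(\int_{V_j}A(|v|)+A(|\nabla v|)\big)^{1/(n-1)}}\right)\le \int_{V_j}A(|v|)+A(|\nabla v|).
\end{equation*}

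Next I would reassemble these local estimates. Writing $I:=\int_{\mathbb S^{n-1}}A(|u|)+A(|\nabla_{\mathbb S}u|)\,d\mathcal H^{n-1}$, the chart comparison gives $\int_{V_j}A(|v|)+A(|\nabla v|)\le C I$ for every $j$, with $C=C(n,\text{atlas})$. Since $A_{n-1}$ is non-decreasing, enlarging the denominator only decreases the left-hand side, so each local inequality survives with the uniform denominator $\kappa' I^{1/(n-1)}$ in place of the $j$-dependent one. Transplanting back to the sphere and using $|u|\le\sum_j \eta_j|u|$ on the support of $\eta_j$ together with convexity of $A_{n-1}$ (to pass the finite sum inside, absorbing the factor $N$ into the constant via \eqref{Ak}), one obtains
\begin{equation*}
\int_{\mathbb S^{n-1}}A_{n-1}\!\left(\frac{|u|}{\kappa'' I^{1/(n-1)}}\right)d\mathcal H^{n-1}\le C' I,
\end{equation*}
and a final rescaling of the argument inside $A_{n-1}$ (again via \eqref{Ak}, replacing $\kappa''$ by $C'\kappa''$ to turn $C'I$ into $I$ on the right) produces \eqref{ineq:sobsphere}. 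For part~(ii), the identical scheme applies, invoking part~(ii) of Theorem~\ref{T:Sobolevn} in dimension $n-1$ — note that the two cases in \eqref{convn-1} are exactly the two instances of \eqref{convinf} with $n\rightsquigarrow n-1$, the case $n=2$ reducing to the one-dimensional Sobolev embedding $W^{1,1}\hookrightarrow L^\infty$ encoded in the condition $\lim_{t\to0^+}A(t)/t>0$ — and using $\|u\|_{L^\infty(\mathbb S^{n-1})}\le\sum_j\|\eta_j u\|_{L^\infty}$, where each summand is controlled by the flat estimate \eqref{est:sobolevninf} on the charts.

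The main obstacle I anticipate is bookkeeping the non-homogeneity of $A$ cleanly through the patching: because Young functions are not homogeneous, one cannot freely pull constants in and out of $A$, $A_{n-1}$, and the normalizing denominators, and the interplay between the multiplicative distortion from the bi-Lipschitz charts, the partition-of-unity cutoffs, and the summation over the atlas must all be routed through the one-sided inequality \eqref{Ak} and the monotonicity \eqref{monotone} rather than through scaling identities. A secondary subtlety is that Theorem~\ref{T:Sobolevn} is stated for functions in $W^1K^A$ on a ball, so one must check that the zero-extension of the chart-pulled-back cutoff function genuinely lies in $W^1K^A(\mathbb B_r)$ — which holds because $\eta_j u$ is compactly supported inside $U_j$, so no boundary contributions to the weak gradient arise. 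Everything else is the standard manifold-to-Euclidean transfer of Sobolev inequalities, and I would keep the computations at the level of modulars throughout, only at the very end remarking (if needed elsewhere) on the passage to Luxemburg norms.
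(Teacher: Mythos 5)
Your plan — localize on the sphere via a finite atlas, pull back to $(n-1)$-dimensional Euclidean balls, apply Theorem~\ref{T:Sobolevn} with $n$ replaced by $n-1$, and patch — is a genuinely different route from the paper, which instead uses spherical symmetrization and a Pólya--Szegő argument reducing matters to a one-dimensional Hardy inequality (see \eqref{july3}--\eqref{hardy}) before passing from norm to modular. However, as written your argument has a gap that kills it for general Young functions $A$.

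The problem is the modular-level comparison under the bi-Lipschitz charts and the partition of unity. After the pullback you need an estimate of the form
\begin{equation*}
\int_{V_j} A(|\nabla v_j|)\,dy \;\lesssim\; \int_{U_j} A(|\nabla_{\mathbb S}(\eta_j u)|)\,d\mathcal H^{n-1}
\;\lesssim\; \int_{U_j} A(|u|)+A(|\nabla_{\mathbb S} u|)\,d\mathcal H^{n-1},
\end{equation*}
and both steps hide a dilation: $|\nabla v_j|\le\Lambda_j\,|(\nabla_{\mathbb S}(\eta_j u))\circ\Phi_j|$ with $\Lambda_j=\mathrm{Lip}(\Phi_j)>1$, and $|\nabla_{\mathbb S}(\eta_j u)|\le \|\nabla_{\mathbb S}\eta_j\|_\infty|u|+|\nabla_{\mathbb S}u|$. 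You therefore need a bound of the type $A(\lambda t)\le C\,A(t)$ for some fixed $\lambda>1$. This is \emph{precisely} the $\Delta_2$-condition, which is not assumed on $A$ in Theorem~\ref{sobolev} (nor anywhere in the paper). Inequality~\eqref{Ak}, which you invoke, gives $\lambda A(t)\le A(\lambda t)$ — the wrong direction. Your parenthetical ``the $\Delta_2$-freeness being irrelevant since we only compare modulars, not norms'' is exactly backwards: modulars are the level at which $\Delta_2$ is unavoidable, because modulars are not homogeneous; Luxemburg norms \emph{are} positively homogeneous, so $\|\lambda f\|_{L^A}=\lambda\|f\|_{L^A}$ absorbs dilations harmlessly. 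A concrete failure: take $n\ge3$ and $A(t)=e^t-1$, which satisfies \eqref{conv0n-1} and the second line of \eqref{convn-1}, yet $A(2t)=A(t)^2+2A(t)$ is not $\lesssim A(t)$. The same issue appears in your treatment of part (ii).

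The way to rescue the patching scheme is to work at the level of Luxemburg norms throughout (where the bi-Lipschitz distortion and cutoff constants become harmless multiplicative factors thanks to homogeneity and the triangle inequality), arriving at a norm inequality such as $\|u\|_{L^{A_{n-1}}(\mathbb S^{n-1})}\lesssim \|u\|_{L^A(\mathbb S^{n-1})}+\|\nabla_{\mathbb S}u\|_{L^A(\mathbb S^{n-1})}$ with a constant independent of $A$, and only then pass to the modular inequality \eqref{ineq:sobsphere} by replacing $A$ with $A_M:=A/M$, where $M$ is the full modular of $u$ — exactly the scaling the paper performs in \eqref{july11}--\eqref{july12}. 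This scaling rescales the \emph{values} of $A$, not its argument, and hence does not require $\Delta_2$. So your route is salvageable but not as presented; the missing ingredient is the norm-to-modular conversion, which the paper's symmetrization proof makes its final step.
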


Theorems \ref{poincare} and \ref{T:Sobolevn}  are special cases of  \cite[Theorems 4.4 and 3.1]{cianchi_Forum}, respectively,
 which hold in any Lipschitz domain in $\rn$ (and for Orlicz-Sobolev spaces of arbitrary order). The assertions about the dependence of the constants can be verified via a standard scaling argument.
Theorem \ref{sobolev} can be derived via arguments analogous to those in the proof of  \cite[Theorem 3.1]{cianchi_Forum}. For completeness, we offer the main steps of the proof.

\begin{proof}[Proof of  Theorem \ref{sobolev}]
\emph{Part (i).}  Let us set 
$$u_{\mathbb S^{n-1}}= \fint_{\mathbb S^{n-1}} u(x)\, d\mathcal H^{n-1}(x).$$
 A key step is a
 Sobolev-Poincar\'e type inequality, a norm version of  \eqref{SP} on $\mathbb S^{n-1}$, which  tells us that
\begin{align}\label{july3}
\|u-u_{\mathbb S^{n-1}}\|_{L^{A_{n-1}}(\mathbb S^{n-1})}\leq c \|\nabla_{\mathbb S} u\|_{L^{A}(\mathbb S^{n-1})}
\end{align}
for some constant $c=c(n)$ and for $u \in V^{1}L^A(\mathbb S^{n-1})$. A proof of inequality \eqref{july3} rests upon the following symmetrization argument combined with a one-dimensional Hardy-type inequality in Orlicz spaces.
\\ Set 
	\begin{equation}\label{cn}
	c_n= \mathcal H^{n-1}(\mathbb S^{n-1})
	\end{equation}
	 and denote by 
$u^\circ : [0,
c_n] \to [-\infty , \infty]$ the signed decreasing
rearrangement of $u$, defined by
$$u^\circ (s) = \inf\{t \in \R: \mathcal H^{n-1}(\{u>t\})\leq s\} \quad \hbox{for $s
\in [0, c_n]$.}$$
Moreover, define the signed symmetral 
$u^\sharp : \mathbb S^{n-1}\to [-\infty , \infty]$ of $u$
as
$$u^\sharp (x) = u^\circ (V(x)) \quad \hbox{for $x \in
\mathbb S^{n-1}$,}$$ 
where $V(x)$ denotes the $ \mathcal H^{n-1}$-measure of the spherical cap on $\mathbb S^{n-1}$, centered at the north pole on $\mathbb S^{n-1}$, whose boundary contains $x$.
Thus, $u^\sharp$ is a
function, which is equimeasurable with $u$, and whose level sets
are spherical caps centered at the north pole. 
\\ The equimeasurability of the functions $u$, $u^\circ$ and $u^\sharp$ ensures that
\begin{equation}\label{aug54}
\|u-u_{\mathbb S^{n-1}}\|_{L^{A_{n-1}}(\mathbb S^{n-1})}= \|u^\sharp-u_{\mathbb S^{n-1}}\|_{L^{A_{n-1}}(\mathbb S^{n-1})}= \|u^\circ -u_{\mathbb S^{n-1}}\|_{L^{A_{n-1}}(0, c_n)}.
\end{equation}
Moreover, since  $u^\circ(c_n/2)$ is a median of  $u^\circ$ on $(0, c_n)$ and $u_{\mathbb S^{n-1}}$ agrees with the mean value of $u^\circ$ over $(0, c_n)$,  one has that
\begin{equation}\label{aug55}
  \|u^\circ-u^\circ(c_n/2)\|_{L^{A_{n-1}}(0, c_n)} \geq \tfrac 12 \|u^\circ-u_{\mathbb S^{n-1}}\|_{L^{A_{n-1}}(0,c_n)}= \tfrac 12 \|u-u_{\mathbb S^{n-1}}\|_{L^{A_{n-1}}(\mathbb S^{n-1})},
\end{equation}
see e.g. \cite[Lemma 2.2]{CMP23}.
\\
On the other hand, a version of the P\'olya-Szeg\"o principle on $\Sn$
tells us that $u^\circ$ is locally absolutely
continuous,
   $u^\sharp \in V^{1}L^A (\Sn)$,
 and
\begin{equation}\label{PS}
\bigg\|I_{\Sn}(s)\, \Big(- \frac
{du^{\circ}}{ds}\Big)\bigg\|_{L^A(0, c_n)}
 = \|\nabla_{\mathbb S} u^{\sharp}\,\|_{L^A (\Sn)}
\leq \|\nabla_{\mathbb S}  u\|_{L ^A (\Sn)},
\end{equation}
where $I_{\Sn}: [0, c_n] \to [0, \infty)$ denotes the isoperimetric function of $\Sn$ (see \cite{BrZi}). It is well-known that there exists a positive constant $c=c(n)$ such that
\begin{equation}\label{Cn}
I_{\Sn} (s)\geq c \min\{s, c_n - s\}^{\frac{n-2}{n-1}} \quad \hbox{for $s\in 
(0, c_n)$}.
\end{equation}
Hence, 
\begin{equation}\label{PS'}
c \bigg\|\min\{s, c_n - s\}^{\frac{n-2}{n-1}}  \, \Big(- \frac
{du^{\circ}}{ds}\Big)\bigg\|_{L^A(0, c_n)}
\leq \|\nabla_{\mathbb S}  u\|_{L ^A (\Sn)},
\end{equation}
The absolute continuity of $u^\circ$ ensures that
\begin{equation}\label{aug56}
u^\circ (s) - u^\circ(c_n)= \int_s^{c_n/2} \bigg(- \frac
{du^{\circ}}{dr}\bigg)\, dr \qquad \text{for $s\in (0, c_n)$.}
\end{equation}
Thanks to equations \eqref{aug54}, \eqref{aug55}, \eqref{PS'}, \eqref{aug56}, and to the symmetry of the function $ \min\{s, c_n - s\}^{\frac{n-2}{n-1}}$ about $c_n/2$, inequality \eqref{july3} is reduced to the inequality
\begin{equation}\label{hardy}
\bigg\|\int_s^{c_n/2}r^{-\frac{n-2}{n-1}} \phi(r)\, dr\bigg\|_{L^{A_{n-1}}(0, c_n/2)} \leq c \|\phi\|_{L^A(0, c_n/2)}
\end{equation}
 for a suitable constant $c=c(n)$ and for   every $\phi \in L^A(0,c_n/2)$. Inequality \eqref{hardy} is in turn a consequence of  \cite[inequality (2.7)]{cianchi_CPDE}.

Next, by Lemma \ref{ineqH}, Section \ref{sec:prelim}, applied with $n$ replaced with $n-1$,
$$\frac 1{\widetilde A^{-1}(1/(t))} \frac 1{A_{n-1}^{-1}(t))} \leq \frac 1{t^{\frac {n-2}{n-1}}} \quad \text{for $t>0$.}$$
Hence,
by   inequality \eqref{holder}, with $\Omega$ replaced with $\Sn$,   one has that
\begin{align}\label{july9}
\|u_{\mathbb S^{n-1}}\|_{L^{A_{n-1}}(\mathbb S^{n-1})}& = |u_{\mathbb S^{n-1}}| \|1\|_{L^{A_{n-1}}(\mathbb S^{n-1})}\leq \frac 2{c_n} \|u\|_{L^A(\mathbb S^{n-1})} \|1\|_{L^{\widetilde A}(\mathbb S^{n-1})} \|1\|_{L^{A_{n-1}}(\mathbb S^{n-1})}
\\ \nonumber & = \frac 2{c_n} \frac 1{\widetilde A^{-1}(1/c_n)} \frac 1{A_{n-1}^{-1}(1/c_n)} \|u\|_{L^A(\mathbb S^{n-1})} \leq \frac 2{c_n^{\frac 1{n-1}}} \|u\|_{L^A(\mathbb S^{n-1})}.
\end{align}
Coupling inequality \eqref{july3} with \eqref{july9} and making use of the triangle inequality entail that
\begin{align}\label{july10}
\|u\|_{L^{A_{n-1}}(\mathbb S^{n-1})}\leq c \big(\|\nabla_{\mathbb S} u\|_{L^{A}(\mathbb S^{n-1})}+  \|u\|_{L^A(\mathbb S^{n-1})}\big)
\end{align}
for some constant $c=c(n)$ and for $u \in W^1L^A(\mathbb S^{n-1})$.
\\ Now set
$$M= \int_{\mathbb S^{n-1}}A(|\nabla_{\mathbb S} u|)+A(|u|)\,d\mathcal H^{n-1}(x),$$
and apply inequality \eqref{july10} with the function $A$ replaced with the Young function $A_M$ given by
$$A_M(t)= \frac{A(t)}M \qquad \text{for $t \geq 0$.}$$
Hence,
\begin{align}\label{july13}
\|u\|_{L^{(A_M)_{n-1}}(\mathbb S^{n-1})}\leq c \big(\|\nabla_{\mathbb S} u\|_{L^{A_M}(\mathbb S^{n-1})}+  \|u\|_{L^{A_M}(\mathbb S^{n-1})}\big),
\end{align}
where $(A_M)_{n-1}$ denotes  the function  obtained on replacing $A$ with $A_M$ in the definition of $A_{n-1}$. The fact that the constant $c$ in \eqref{july10} is independent of $A$ is of course crucial in deriving inequality \eqref{july13}.
Observe that
\begin{align}\label{july11}
(A_M)_{n-1}(t) = \frac 1M A_{n-1}\Big(\frac t {M^{\frac 1{n-1}}}\Big) \qquad \text{for $t \geq 0$.}
\end{align}
On the other hand, by the definition of Luxemburg norm and the choice of $M$,
\begin{align}\label{july12}
  \|u\|_{L^{A_M}(\mathbb S^{n-1})}\leq 1 \quad \text{and} \quad \|\nabla_{\mathbb S} u\|_{L^{A}(\mathbb S^{n-1})}\leq 1.
\end{align}
Therefore, by
the definition of Luxemburg norm again, inequality \eqref{july13} tells us that
$$
\frac 1M  \int_{\mathbb S^{n-1}} A_{n-1}\bigg(\frac{|u(x)|}{2c M^{\frac 1{n-1}}}\bigg)\, d\mathcal H^{n-1}(x)\leq 1.
$$
Hence, inequality \eqref{ineq:sobsphere} follows.
\\ \emph{Part (ii).} 
%
%
%
First, assume that $n \geq 3$ and the integral condition in \eqref{convn-1} holds.
Let $\overline A$ be the Young function defined as
\begin{equation}\label{july14}
\overline A (t) =  
\bigg( t^{\frac {n-1}{n-2}}\,
\int_t^\infty \frac{ \widetilde{A}(r)}{r^{1+ \frac {n-1}{n-2}}}\; dr\bigg)^{\widetilde{}}\qquad \text{for $t\geq 0$,}
\end{equation}
where $(\cdots)^{\widetilde{}}$ stands for the Young conjugate of the function in parenthesis.
Notice that the convergence of integral on the right-hand side of equation \eqref{july14} is equivalent to the convergence of the integral in \eqref{convn-1}, see \cite[Lemma 2.3]{cianchi_ibero}. Since we are assuming that $A$ fulfills condition \eqref{conv0n-1}, the same lemma also  ensures that 
\begin{equation}\label{july15}
\int_0 \frac{ \widetilde{A}(r)}{r^{1+ \frac {n-1}{n-2}}}\; dr <\infty.
\end{equation}
From \cite[Theorem 4.1]{carozza-cianchi} one has that 
\begin{align}\label{july18}
\overline A\big(c \|u-u_{\mathbb S^{n-1}}\|_{L^{\infty}(\mathbb S^{n-1})}\big)\leq  \fint_{\mathbb S^{n-1}} A(|\nabla_{\mathbb S} u|)\,d\mathcal H^{n-1}
\end{align}
for some positive constant $c=c(n)$ and for $u \in V^1K^A(\mathbb S^{n-1})$.
\\ Furthermore, by Jensen's inequality,
\begin{align}\label{july19}
A\big( \|u_{\mathbb S^{n-1}}\|_{L^{\infty}(\mathbb S^{n-1})}\big)\leq  A \bigg( \fint_{\mathbb S^{n-1}} |u|\,d\mathcal H^{n-1}\bigg) \leq \fint_{\mathbb S^{n-1}} A(|u|)\,d\mathcal H^{n-1}.
\end{align}
Thanks to \cite[Inequality (4.6)]{carozza-cianchi},
\begin{equation}\label{july20}
\overline A(t) \leq A(t) \qquad \text{for $t \geq 0$.}
\end{equation}
Moreover, inequality \eqref{july15} ensures that
\begin{equation}\label{july21}
 t^{\frac {n-1}{n-2}}\,
\int_t^\infty \frac{ \widetilde{A}(r)}{r^{1+ \frac {n-1}{n-2}}}\; dr\leq c \, t^{\frac {n-1}{n-2}} \qquad \text{for $t \geq 0$,}
\end{equation}
where we have set 
$$c= \int_0^\infty \frac{ \widetilde{A}(r)}{r^{1+ \frac {n-1}{n-2}}}\; dr.$$
Taking the Young conjugates of both sides of inequality \eqref{july21} results in
\begin{equation}\label{july22}
\overline A (t) \geq c t^{n-1} \qquad \text{for $t \geq 0$,}
\end{equation}
for some constant $c=c(n,A)$. Inequality \eqref{ineq:sobsphereinf} follows, via the triangle inequality, from inequalities \eqref{july18}, \eqref{july19}, \eqref{july20} and \eqref{july22}.
\\ Assume next that $n=2$ and the limit condition in \eqref{convn-1} holds. If we denote by $a$ this limit, then 
\begin{equation}\label{july39}
A(t)\geq at \qquad \text{for $t \geq 0$.}
\end{equation}
  A simple one-dimensional argument, coupled with Jensen's inequality and the increasing monotonicity of the function $tA^{-1}(1/t)$ shows that
\begin{align}\label{july38}
 A\big( \tfrac 1{2\pi}\|u-u_{\mathbb S^{1}}\|_{L^{\infty}(\mathbb S^{1})}\big)\leq  \fint_{\mathbb S^{1}} A(|\nabla_{\mathbb S} u|)\,d\mathcal H^{1}
\end{align}
for $u \in V^1K^A(\mathbb S^{1})$ (see \cite[Inequality (4.8) and below]{carozza-cianchi}).  Inequality 
\eqref{ineq:sobsphereinf} now follows from  \eqref{july19} (which holds also when $n=2$),  \eqref{july39} and \eqref{july38}.
\end{proof}

\section{Analitic lemmas}\label{sec:prelim}

%

Here, we collect a few technical lemmas about one-variable functions. We begin with two inequalities involving a Young function and its Sobolev conjugate.
 
\begin{lemma}\label{ineqAAn} Let $n \geq 2$ and let $A$ be a Young function fulfilling condition \eqref{conv0}. Then, for every $k>0$ there exists a positive constant $c=c(k, A, n)$ such that
\begin{equation}\label{july33}
A(t) \leq A_n(kt) + c \qquad \text{for $t \geq 0$.}
\end{equation}
\end{lemma}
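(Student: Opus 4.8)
The claim is essentially that $A$ is dominated near infinity by its own $n$-dimensional Sobolev conjugate $A_n$. The plan is to compare $A(t)$ with $A_n(kt) = A(H_n^{-1}(kt))$ and show that, for large $t$, one has $H_n^{-1}(kt) \geq t$, i.e. $H_n(t) \leq kt$ — or rather the slightly weaker statement that $H_n(t)/t \to 0$, which by monotonicity will give $H_n(t) \leq kt$ for $t$ large. Recalling
\begin{equation*}
H_n(s)=\biggl(\int_0^s\biggl(\frac{t}{A(t)}\biggr)^{\frac{1}{n-1}}\,dt\biggr)^{\frac{n-1}{n}},
\end{equation*}
the first step is to analyze the growth of $H_n$. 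Since $\frac{A(t)}{t}$ is non-decreasing (by \eqref{monotone}), the integrand $\bigl(\frac{t}{A(t)}\bigr)^{1/(n-1)}$ is non-increasing, so for $s$ large, $\int_0^s (\tfrac{t}{A(t)})^{1/(n-1)} dt$ grows sublinearly in $s$: more precisely, writing $g(t) = (\tfrac{t}{A(t)})^{1/(n-1)}$, we have $\int_0^s g \leq \int_0^{s/2} g + \tfrac{s}{2} g(s/2)$, and iterating/using that $g$ is non-increasing one deduces $\frac{1}{s}\int_0^s g(t)\,dt \to 0$ as $s \to \infty$ (the finiteness near $0$ from \eqref{conv0} handles the lower endpoint; the decay at infinity is where \eqref{divinf} is \emph{not} needed — only monotonicity is). Hence $\bigl(\frac{1}{s}\int_0^s g\bigr)^{(n-1)/n} \to 0$, i.e. $H_n(s)^{n/(n-1)} = s \cdot o(1)$, so $H_n(s) = s^{(n-1)/n} \cdot o(1)^{(n-1)/n}$, and in particular $\frac{H_n(s)}{s} \to 0$.

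\textbf{From $H_n(s)/s\to 0$ to the inequality.} Fix $k>0$. Choose $s_0$ such that $H_n(s) \leq k s$ for all $s \geq s_0$. Since $H_n$ is increasing and continuous, $H_n^{-1}$ is increasing, and $H_n(s) \leq ks$ for $s \geq s_0$ translates into $H_n^{-1}(\sigma) \geq \sigma/k$ for $\sigma \geq H_n(s_0)$; equivalently $H_n^{-1}(kt) \geq t$ for $t \geq s_0$. Therefore, using that $A$ is non-decreasing,
\begin{equation*}
A(t) \leq A\bigl(H_n^{-1}(kt)\bigr) = A_n(kt) \qquad \text{for } t \geq s_0.
\end{equation*}
For $t \in [0, s_0]$ one simply bounds $A(t) \leq A(s_0) =: c$, which is finite (note $A$ may take the value $+\infty$ only beyond some point, but after the harmless modification near $0$ ensuring \eqref{conv0}, $A$ is finite on an interval $[0, t_A)$ with $t_A > 0$; if $s_0 \geq t_A$ one enlarges $s_0$ — actually $A$ finite-valued is exactly what \eqref{conv0} forces near $0$, and since $A_n$ then grows at least like $A$, $A$ must be finite everywhere on $[0,\infty)$, so this is not an issue). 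Combining the two ranges gives $A(t) \leq A_n(kt) + c$ for all $t \geq 0$ with $c = c(k, A, n)$.

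\textbf{Main obstacle.} The only genuinely non-trivial point is establishing $H_n(s)/s \to 0$, i.e. the sublinear growth of $\int_0^s (\tfrac{t}{A(t)})^{1/(n-1)}\,dt$. This is a soft consequence of the monotonicity of $A(t)/t$ combined with the convergence of the integral near $0$ — a standard Cesàro-type argument for non-increasing integrands: if $h \geq 0$ is non-increasing and integrable near $0$, then $\frac{1}{s}\int_0^s h \to 0$ iff $h$ is not eventually bounded below by a positive constant, but since $A$ is a (nontrivial) Young function, $A(t)/t \to \infty$ is \emph{not} automatic — however $A(t)/t$ non-decreasing plus $A$ nontrivial gives $A(t)/t \geq c_1 > 0$ eventually, hence $h(t) = (\tfrac{t}{A(t)})^{1/(n-1)} \leq c_1^{-1/(n-1)}$ bounded, and then $\frac1s\int_0^s h \to \lim h = \inf_{t} $ of the limit... to be safe one uses: $h$ non-increasing $\Rightarrow \frac1s\int_0^s h(t)\,dt \to \lim_{t\to\infty} h(t) =: \ell$, and one must separately argue $\ell = 0$ — but in fact $\ell$ need not be $0$! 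If $A(t) \approx c t$ near infinity then $\ell > 0$ and $H_n(s) \approx s^{(n-1)/n}$, which still satisfies $H_n(s)/s \to 0$ since $(n-1)/n < 1$. So the correct statement to extract is simply $\int_0^s h(t)\,dt \leq C s$ (trivially, from boundedness of $h$), giving $H_n(s) \leq C' s^{(n-1)/n} = o(s)$. This cruder bound suffices and sidesteps the delicate Cesàro limit entirely; I would present the argument this way. I expect no real difficulty beyond bookkeeping the modification of $A$ near $0$.
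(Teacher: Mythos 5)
Your argument is correct and takes the same route as the paper, whose one-line proof simply asserts $\lim_{t\to\infty} H_n^{-1}(t)/t = \infty$, equivalently $H_n(s)/s\to 0$, and then deduces $A(t)\le A_n(kt)$ for large $t$, taking $c=A(t_0)$. You supply the justification the paper leaves implicit, via the crude bound $H_n(s)\le C\,s^{(n-1)/n}$ coming from the boundedness of the non-increasing integrand $(t/A(t))^{1/(n-1)}$ away from the origin; the Ces\`aro digression in your first paragraph is, as you yourself conclude, unnecessary and should be cut in favor of that direct estimate.
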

\begin{proof} Fix $k>0$. Since $A_n(t) = A( H_n^{-1}(t))$ and $\lim_{t\to \infty} \frac{H_n^{-1}(t)}t = \infty$, there exists $t\geq t_0$ such that $A(t) \leq A_n(kt)$ for $t \geq t_0$. Inequality \eqref{july33} hence follows, with $c= A(t_0)$.
\end{proof}

\begin{lemma}\label{ineqH} Let $n \geq 2$ and let $A$ be a Young function fulfilling condition \eqref{conv0}.
Then,
\begin{equation}\label{july5}
\frac 1{\widetilde A^{-1}(t)}\frac 1{A_n^{-1}(t)} \leq \frac 1{t^{\frac 1{n'}}} \qquad \text{for $t>0$.}
\end{equation}
\end{lemma}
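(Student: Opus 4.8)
\textbf{Proof plan for Lemma~\ref{ineqH}.} The plan is to recognize that inequality \eqref{july5} is exactly the combination of two elementary facts: the universal bound \eqref{AAtilde} relating the inverse of a Young function to the inverse of its conjugate, and an analogous relation between $A^{-1}$ and $A_n^{-1}$ coming from the definition \eqref{An}--\eqref{Hn} of the Sobolev conjugate. First I would write, using the left inequality in \eqref{AAtilde} applied to $A$, that $\widetilde A^{-1}(t) \geq t/A^{-1}(t)$, hence $\frac{1}{\widetilde A^{-1}(t)} \leq \frac{A^{-1}(t)}{t}$ for $t>0$. Then it remains to prove that
\begin{equation}\label{eq:key-reduction}
\frac{A^{-1}(t)}{A_n^{-1}(t)} \leq t^{1-\frac1{n'}} = t^{\frac1n} \qquad \text{for } t>0,
\end{equation}
since multiplying the two displayed bounds gives $\frac{1}{\widetilde A^{-1}(t)}\frac{1}{A_n^{-1}(t)} \leq \frac{A^{-1}(t)}{t\,A_n^{-1}(t)} \leq \frac{t^{1/n}}{t} = t^{-1/n'}$, which is \eqref{july5}.

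To establish \eqref{eq:key-reduction}, I would unravel the definition of $A_n$. Since $A_n(s) = A(H_n^{-1}(s))$, applying $A_n^{-1}$ shows $A_n^{-1}(t) = H_n(A^{-1}(t))$ for $t>0$ (taking a little care with generalized inverses, or simply using that $A$ and $H_n$ are increasing and continuous on the relevant range after the usual modification near $0$). Therefore \eqref{eq:key-reduction} becomes $A^{-1}(t) \leq t^{1/n}\, H_n(A^{-1}(t))$, i.e., writing $s = A^{-1}(t)$ so that $t = A(s)$, it suffices to show
\begin{equation}\label{eq:Hn-bound}
s \leq A(s)^{\frac1n}\, H_n(s) \qquad \text{for } s>0,
\end{equation}
equivalently $H_n(s) \geq s\, A(s)^{-1/n}$. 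By the definition \eqref{Hn}, $H_n(s) = \big(\int_0^s (t/A(t))^{\frac1{n-1}}\,dt\big)^{\frac{n-1}{n}}$, so \eqref{eq:Hn-bound} is equivalent to
\begin{equation}\label{eq:integral-lower}
\int_0^s \Big(\frac{t}{A(t)}\Big)^{\frac1{n-1}}\,dt \geq \frac{s^{\frac{n}{n-1}}}{A(s)^{\frac1{n-1}}}.
\end{equation}
This last inequality follows from the monotonicity property \eqref{monotone}: since $A(t)/t$ is non-decreasing, for $t\in(0,s]$ we have $A(t)/t \leq A(s)/s$, hence $t/A(t) \geq t\cdot s/(t\,A(s)) = s/A(s)$... more carefully, $\big(t/A(t)\big)^{1/(n-1)} \geq \big((t/s)(s/A(s))\big)^{1/(n-1)}$ is what monotonicity gives after noting $t/A(t) = (t/s)\cdot(s/A(s))\cdot\frac{A(s)/s}{A(t)/t} \geq (t/s)(s/A(s))$; integrating the resulting lower bound $(s/A(s))^{1/(n-1)} (t/s)^{1/(n-1)}$ over $(0,s)$ yields $(s/A(s))^{1/(n-1)}\cdot \frac{n-1}{n} s \cdot$... wait, $\int_0^s (t/s)^{1/(n-1)}\,dt = s\cdot\frac{n-1}{n}$, which would cost a constant $\frac{n-1}{n}<1$. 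To get the clean constant $1$ in \eqref{eq:integral-lower} I would instead integrate only the cruder bound: on $(0,s)$, by \eqref{monotone}, $t/A(t) \geq s/A(s)\cdot$ (something $\geq 1$)? That is false. So the honest route is to use the sharp computation $\int_0^s (t/A(t))^{1/(n-1)}\,dt \geq (s/A(s))^{1/(n-1)}\int_0^s (t/s)^{... }$; let me reconsider—actually the correct elementary bound is: since $A(t)/t$ is non-decreasing, $\big(\tfrac{t}{A(t)}\big)^{\frac1{n-1}} \geq \big(\tfrac{t}{A(s)}\cdot\tfrac{s}{s}\big)^{\frac1{n-1}}\cdot$ hmm. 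The cleanest and certainly correct statement is $t/A(t)\ge t/A(s)$ is wrong too. What is true: $A(t)\le \frac{A(s)}{s}t$ for $t\le s$, so $\frac{t}{A(t)}\ge \frac{s}{A(s)}$, giving $\int_0^s(\tfrac{t}{A(t)})^{\frac1{n-1}}dt \ge s\,(\tfrac{s}{A(s)})^{\frac1{n-1}} = \tfrac{s^{n/(n-1)}}{A(s)^{1/(n-1)}}$, which is precisely \eqref{eq:integral-lower}. Good — so the elementary bound $\tfrac{t}{A(t)}\ge\tfrac{s}{A(s)}$ on $(0,s)$, a direct consequence of \eqref{monotone}, does the job with constant $1$.

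\textbf{Main obstacle.} The only genuinely delicate point is the bookkeeping with generalized right-continuous inverses and the behavior of $A$ near $0$: the identity $A_n^{-1}(t) = H_n(A^{-1}(t))$ and the passage $s=A^{-1}(t)\Leftrightarrow t=A(s)$ need $A$ to be (essentially) strictly increasing and continuous on the range in question, which is guaranteed after the standard modification of $A$ near $0$ that makes \eqref{conv0} hold, as discussed in the text. Once one is comfortable that these inverse manipulations are legitimate (or, alternatively, proves the two chained inequalities $\widetilde A^{-1}(t)A^{-1}(t)\ge t$ and $A^{-1}(t)\le t^{1/n}A_n^{-1}(t)$ directly at the level of the functions $A$, $\widetilde A$, $A_n$ without ever inverting), the rest is the one-line convexity estimate \eqref{eq:integral-lower}. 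I expect the write-up to be short, with the inverse-function care being the part worth stating explicitly.
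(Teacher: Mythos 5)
Your proposal is correct and its skeleton is the same as the paper's: both reduce \eqref{july5}, via the first inequality in \eqref{AAtilde} and the identity $A_n^{-1}=H_n\circ A^{-1}$, to the pointwise estimate $t\leq A(t)^{1/n}H_n(t)$. The one genuine difference is in how that estimate is proved: the paper obtains it from H\"older's inequality with exponents $n$ and $n'$ applied to $1=\big(\tfrac{A(r)}{r}\big)^{1/n}\big(\tfrac{r}{A(r)}\big)^{1/n}$, followed by the bound $\int_0^t A(r)/r\,dr\leq A(t)$ from \eqref{monotone}, whereas you integrate the one-line pointwise bound $\tfrac{r}{A(r)}\geq\tfrac{t}{A(t)}$ for $0<r\leq t$ (a direct consequence of \eqref{monotone}) over $(0,t)$, which gives $H_n(t)^{n/(n-1)}\geq t\cdot\big(\tfrac{t}{A(t)}\big)^{1/(n-1)}$ and hence $H_n(t)\geq t/A(t)^{1/n}$ with the clean constant $1$. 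Your route is a hair more elementary — it dispenses with H\"older entirely. (The meandering in your write-up, where you first try the interpolation $(r/t)(t/A(t))$ and notice it only yields the constant $(n-1)/n$, is a dead end you correctly abandon; only the final single-line monotonicity bound is used.) On the delicate point you flag about inverses: the modification of $A$ near $0$ mentioned in the text makes $A$ and $H_n$ continuous and strictly increasing there, so $A_n^{-1}=H_n\circ A^{-1}$ holds as you say and both you and the paper use it without further comment.
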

\begin{proof}
H\"older's inequality and property \eqref{monotone}  imply that
\begin{align}\label{july6}
t& = \int_0^t \bigg(\frac{A(r)}r\bigg)^{\frac 1n} \bigg(\frac r{A(r)}\bigg)^{\frac 1n} \, dr \leq \bigg( \int_0^t  \frac{A(r)}r  \, dr\bigg)^{\frac 1n}  \bigg( \int_0^t  \bigg(\frac r{A(r)}\bigg)^{\frac 1{n-1}} \, dr\bigg)^{\frac 1{n'}} 
\\ \nonumber & \leq  \bigg(\frac{A(t)}t\bigg)^{\frac 1n} t^{\frac 1n} H_n(t) = A(t)^{\frac 1n} H_n(t) \quad \text{for $t> 0$.}
\end{align}
Hence, 
\begin{align}\label{july7}
A^{-1}(t) \leq t^{\frac 1n} H_n(A^{-1}(t)) \quad \text{for $t \geq 0$.}
\end{align}
The first inequality in \eqref{AAtilde} and inequality  \eqref{july7} imply that
 \begin{align} \label{july8}t \leq A^{-1}(t)\widetilde A^{-1}(t)\leq t^{\frac 1n} H_n(A^{-1}(t))\widetilde A^{-1}(t)  \quad \text{for $t \geq 0$.}
\end{align}
Hence, inequality \eqref{july5} follows.
\end{proof}

The next result ensures that the functions $A$, $B$ and $E$ appearing in assumption \eqref{ass1} can be modified near $0$ in such a way  that such an assumption is still fulfilled, possibly with a different constant $L$, and the conditions imposed on $A$, $B$ and $E$ in Theorem  \ref{T} are satisfied globally, instead of just near infinity. Of course, the same applies to the simpler conditions of Theorem  \ref{Tbis}, where the function $E$ is missing.

\begin{lemma}\label{L:widehateE}
%
Assume that the functions $f$, $A$, $B$ and $E$ are as in Theorem \ref{T}.
Then, there exist two Young functions $\widehat A, \widehat B:[0,\infty)\to[0,\infty)$, an increasing function $\widehat E:[0,\infty)\to[0,\infty)$,
and constants $\widehat L \geq 1$ and  $q>n$ such that:
\begin{align}\label{aug61}
\widehat A(|\xi|)-\widehat E(|t|)-\widehat L\leq f(x,t,\xi)\leq \widehat B(|\xi|)+\widehat E(|t|)+\widehat L\qquad\mbox{for  a.e.\ $x\in\Omega$, for every $t\in\R$, and every  $\xi\in\R^n$,}
\end{align}
\begin{align}\label{th1}
 t^{\frac{n}{n-1}}\leq  \widehat L \,\widehat A_n(t) \quad \text{for $t \geq 0$,}
\end{align}
\begin{align}\label{Lhat:limAtt0}
\lim_{t\to0^+}\frac{\widehat A(t)}t>0,
\end{align}
\begin{align}\label{th5}
 \widehat E(2 t)\leq \widehat L \widehat E(t)  \quad \text{for $t \geq 0$,}
\end{align}
\begin{align}\label{th6}
\widehat E(t)\leq \widehat  A_n(\widehat L t)  \quad \text{for $t \geq 0$,}
\end{align}
\begin{align}\label{th3}
\widehat B(\lambda t)\leq \lambda^{q} \widehat B(t)  \quad \text{for $t \geq 0$ and $\lambda \geq 1$.}
\end{align}
Moreover, if
 assumption \eqref{subcrit} is in force, then the function $B$ satisfies assumption \eqref{ass:BAn1} and
\begin{align}\label{july24}
\widehat B(t)\leq \widehat A_{n-1}(\widehat  Lt)\qquad\mbox{for $t\geq0$;}
\end{align}
if assumption \eqref{supercrit} is in force, then
\begin{align}\label{th4}
\widehat B(t)\leq \widehat L t^q  \quad \text{for $t \geq 0$.}
\end{align}
Here, $\widehat A_{n-1}$ and $\widehat A_{n}$ denote the functions defined as $A_{n-1}$ and $A_{n}$, with $A$ replaced with $\widehat A$.
%
\end{lemma}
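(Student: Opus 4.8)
The plan is to perform the modifications of $A$, $B$, and $E$ near $0$ in two logically separate stages: first, adjust $A$ near the origin so that the $n$-dimensional (and, where needed, $(n-1)$-dimensional) Sobolev conjugate is well-defined and exhibits the desired global lower bounds; then, with $A$ fixed, redefine $B$ and $E$ near the origin so that their global growth/domination inequalities hold and so that the two-sided growth condition \eqref{aug61} survives, possibly with an enlarged constant $\widehat L$.

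\textbf{Step 1: modifying $A$.} First I would recall that the behaviour of $A$ near $0$ is irrelevant for membership in the Orlicz–Sobolev classes and, thanks to the additive constant $L$ in \eqref{ass1}, also for the validity of the two-sided bound. So we may replace $A$ by a Young function $\widehat A$ which agrees with $A$ for $t \geq t_1$ (for some $t_1 \geq t_0$), which is linear near $0$, i.e. $\widehat A(t) = a t$ for $t$ small with $a = \lim_{t \to 0^+} \widehat A(t)/t > 0$, giving \eqref{Lhat:limAtt0}, and which is smooth and strictly convex in between. This linear behaviour near $0$ guarantees $\int_0 (t/\widehat A(t))^{1/(n-1)}\,dt < \infty$, and, when $n \geq 3$, also $\int_0(t/\widehat A(t))^{1/(n-2)}\,dt < \infty$, so that \eqref{An}--\eqref{Hn} define $\widehat A_n$ and $\widehat A_{n-1}$. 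To get \eqref{th1}, namely $t^{n/(n-1)} \leq \widehat L\, \widehat A_n(t)$, I would compute $H_n$ from its definition: since $\widehat A(t) \approx t$ near $0$, the integrand $(t/\widehat A(t))^{1/(n-1)}$ is bounded near $0$, so $H_n(s) \lesssim s^{(n-1)/n}$ for small $s$ and hence $H_n^{-1}(t) \gtrsim t^{n/(n-1)}$ for small $t$; combined with $\widehat A(\tau) \geq a\tau$ this yields $\widehat A_n(t) = \widehat A(H_n^{-1}(t)) \gtrsim t^{n/(n-1)}$ near $0$, while for large $t$ one uses $\lim_{t\to\infty} H_n^{-1}(t)/t = \infty$ together with \eqref{Ak} (i.e. $\widehat A_n$ itself is a Young function, so $\widehat A_n(t) \geq c\,t$ for large $t$, which dominates $t^{n/(n-1)}$ only after... ) — actually here I would instead invoke Lemma~\ref{ineqAAn} applied to $\widehat A_n$ versus its own $n$-conjugate, or more directly note that $\widehat A_n$ grows super-$(n/(n-1))$-linearly; a cleaner route is: $\widehat A_n$ is a Young function vanishing linearly at $0$, hence $t^{n/(n-1)} \lesssim \widehat A_n(t) + 1 \lesssim \widehat A_n(2t)$ by convexity, giving \eqref{th1} after absorbing constants via \eqref{Ak}. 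I would present the small-$t$ estimate carefully and dispatch the large-$t$ part by this convexity trick.

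\textbf{Step 2: modifying $E$ and $B$.} With $\widehat A$ (and thus $\widehat A_n$, $\widehat A_{n-1}$) fixed, I define $\widehat E$ to agree with $E$ for $t \geq t_0$ and to be, say, $\widehat E(t) = \widehat E(t_0)(t/t_0)^{n/(n-1)}$ (or any power $> 1$ with the right exponent) for $t \leq t_0$; this makes $\widehat E$ increasing, vanishing at $0$, gives the global $\Delta_2$ bound \eqref{th5} (the near-$0$ piece is a pure power, hence $\Delta_2$, and the two pieces match), and gives \eqref{th6} by combining the near-infinity hypothesis \eqref{ass:EAn1} with the near-$0$ comparison $\widehat E(t) \lesssim t^{n/(n-1)} \lesssim \widehat A_n(\widehat L t)$ from \eqref{th1}, choosing $\widehat L$ large enough. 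Similarly I define $\widehat B$ to equal $B$ for $t \geq t_0$ and to be a suitable power $\widehat B(t) = c t^q$ for $t \leq t_0$ where $q > n$ is chosen at least as large as the $\Delta_2$-exponent of $B$ near infinity (available from \eqref{equivdelta2}); then $\widehat B$ is a Young function with $\widehat B(\lambda t) \leq \lambda^q \widehat B(t)$ globally, which is exactly \eqref{th3} — here one must check the inequality across the junction, which follows from $tB'(t)/B(t) \leq q$ near infinity and the power form near $0$. For \eqref{july24} in the subcritical regime \eqref{subcrit}: the hypothesis \eqref{ass:BAn1} gives $\widehat B(t) = B(t) \leq A_{n-1}(Lt) = \widehat A_{n-1}(Lt)$ for $t \geq t_0$, and for $t \leq t_0$ one uses the power bound $\widehat B(t) = ct^q$ against the near-$0$ behaviour of $\widehat A_{n-1}$ (which, by the same computation as for $\widehat A_n$, is bounded below by $t^{(n-1)/(n-2)}$ when $\int_0(t/\widehat A(t))^{1/(n-2)}<\infty$; since $q>n>(n-1)/(n-2)$ for $n\geq 3$... one needs $q \cdot (\text{stuff})$ — in fact near $0$ a power $t^q$ with $q$ large is dominated by \emph{any} fixed power times a constant on the bounded interval $[0,t_0]$, so $\widehat B(t) \leq \widehat A_{n-1}(\widehat L t)$ there for $\widehat L$ large, since $\widehat A_{n-1}(\widehat L t) \gtrsim (\widehat L t)^{(n-1)/(n-2)} \gtrsim t_0^{(n-1)/(n-2) - q} t^q$... ) — the point is simply that on the compact interval $[0,t_0]$ all positive powers are mutually dominated up to constants, so the matching is automatic after enlarging $\widehat L$. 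In the supercritical regime \eqref{supercrit}, $B$ has no domination constraint, so I just take $\widehat B$ to be a global power-type Young function agreeing with $B$ at infinity up to the $\Delta_2$ structure; but actually \eqref{th4} asks $\widehat B(t) \leq \widehat L t^q$ globally, which is \emph{false} for a generic $\Delta_2$ function at infinity unless we also know $B(t) \lesssim t^q$ near infinity — and indeed $B \in \Delta_2$ near infinity with exponent $q$ gives precisely $B(t) \leq c\, t^q$ for $t \geq t_0$ by integrating $tB'/B \leq q$, so this holds.

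\textbf{Main obstacle.} The routine part is all the near-$0$ splicing; the genuine content is \textbf{Step 1}, producing a single modification $\widehat A$ that simultaneously (a) has a well-defined $(n-1)$-conjugate, (b) vanishes linearly at $0$, and (c) satisfies the explicit lower bound \eqref{th1} with a \emph{clean} exponent $n/(n-1)$ valid for \emph{all} $t \geq 0$, not merely near one end. The large-$t$ half of \eqref{th1} is the subtle point: it is not automatic that $\widehat A_n(t) \gtrsim t^{n/(n-1)}$ globally, and one must argue it via the structure of $H_n$ — concretely, from $\widehat A(t)/t$ nondecreasing (property \eqref{monotone}) one gets $H_n(s)^{n/(n-1)} = \int_0^s (t/\widehat A(t))^{1/(n-1)} dt \leq s \cdot (s/\widehat A(s))^{1/(n-1)} \cdot (\text{const})$ is the wrong direction, so instead the correct manipulation (mirroring the computation in Lemma~\ref{ineqH}) is to bound $H_n(s)$ from above by Hölder, obtaining $H_n(s) \leq \widehat A(s)^{1/n} s^{1/n} / s^{1/n}\cdot\ldots$ yielding $H_n^{-1}(t) \geq$ a multiple of $\widehat A^{-1}(t^n/\ldots)$, and then $\widehat A_n(t) = \widehat A(H_n^{-1}(t)) \geq t^{n/(n-1)}$ up to constants. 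I would therefore isolate \eqref{th1} as the first and most delicate claim, prove it using the Hölder computation already present in the proof of Lemma~\ref{ineqH} (which gives $A^{-1}(t) \leq t^{1/n} H_n(A^{-1}(t))$, i.e. $H_n(s) \geq A(s)^{(n-1)/n}/A(s)^{(n-1)/n}\cdot\ldots$ rearranged as $H_n^{-1}(t)\leq\ldots$) — more precisely, from \eqref{july7} with $A$ replaced by $\widehat A$ we get $\widehat A^{-1}(t) \leq t^{1/n} H_n(\widehat A^{-1}(t))$, so writing $s = \widehat A^{-1}(t)$, $t = \widehat A(s)$, this reads $s \leq \widehat A(s)^{1/n} H_n(s)$, hence $H_n(s)^{n} \geq s^n/\widehat A(s)$, hence (since $H_n(H_n^{-1}(t)) = t$, set $s = H_n^{-1}(t)$) $t^n \geq (H_n^{-1}(t))^n / \widehat A(H_n^{-1}(t)) = (H_n^{-1}(t))^n/\widehat A_n(t)$, which gives $\widehat A_n(t) \geq (H_n^{-1}(t))^n / t^n$, and then invoking $\lim_{t\to\infty} H_n^{-1}(t)/t = \infty$ together with the near-$0$ linear behaviour $H_n^{-1}(t) \gtrsim t^{n/(n-1)}$ furnishes $\widehat A_n(t) \gtrsim t^{n/(n-1)}$ on $[0,\infty)$ after adjusting $\widehat L$ — wait, $(H_n^{-1}(t))^n/t^n$ with $H_n^{-1}(t) \gtrsim t^{n/(n-1)}$ gives $\widehat A_n(t) \gtrsim t^{n^2/(n-1)}/t^n = t^{n/(n-1)}$, exactly \eqref{th1}. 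Everything downstream is bookkeeping, so the writeup would lead with this and then briskly verify the remaining displays.
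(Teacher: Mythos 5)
Your overall plan — linearize $A$ near $0$, then splice power-type tails onto $B$ and $E$ near $0$ and absorb the difference into additive constants — is essentially the paper's construction. However, there are two concrete gaps.

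\emph{The large-$t$ half of \eqref{th1}.} Your final argument bounds $\widehat A_n(t)\geq (\widehat H_n^{-1}(t))^n/t^n$ via the H\"older computation and then says the large-$t$ case is handled by $\lim_{t\to\infty}\widehat H_n^{-1}(t)/t=\infty$. That limit only gives $\widehat H_n^{-1}(t)\gtrsim t$, which for $n\geq 2$ is strictly weaker than the needed $\widehat H_n^{-1}(t)\gtrsim t^{n/(n-1)}$; the plugged-in conclusion $\widehat A_n(t)\gtrsim t^{n/(n-1)}$ does not follow from it. The fix, which the paper uses and which makes the detour through Lemma~\ref{ineqH} unnecessary, is to observe that the lower bound $\widehat A(t)\geq at$ holds \emph{for every} $t\geq 0$ (because $A(t)/t$ is non-decreasing and $\widehat A=A$ for $t\geq t_1$), hence $(t/\widehat A(t))^{1/(n-1)}\leq a^{-1/(n-1)}$ globally, hence $\widehat H_n(s)\leq a^{-1/n}s^{(n-1)/n}$ and $\widehat H_n^{-1}(t)\geq a^{1/(n-1)}t^{n/(n-1)}$ \emph{for every} $t\geq 0$; composing with $\widehat A(\tau)\geq a\tau$ yields $\widehat A_n(t)\geq (at)^{n/(n-1)}$ directly, small and large $t$ alike. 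You actually have all the pieces but misattribute the global range of the bound.

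\emph{Convexity of the spliced $\widehat B$.} You set $\widehat B(t)=ct^q$ for $t\leq t_0$ (with $q>n$ and $ct_0^q=B(t_0)$) and $\widehat B=B$ for $t\geq t_0$. For this to be a Young function you need the left derivative at $t_0$ not to exceed the right derivative, i.e.\ $q\,B(t_0)/t_0\leq B'(t_0)$, i.e.\ $q\leq t_0 B'(t_0)/B(t_0)$. Nothing forces this: the $\Delta_2$-bound $tB'(t)/B(t)\leq q$ near infinity points in the \emph{opposite} direction, and $t_0 B'(t_0)/B(t_0)$ can be as small as $1$. So your $\widehat B$ need not be convex; the remark that ``the inequality across the junction follows from $tB'(t)/B(t)\leq q$'' is backwards. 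The paper circumvents this by taking $\widehat B=\widehat A_{n-1}$ on $[0,t_2)$, a \emph{linear} interpolation on $[t_2,t_3)$, and $B$ on $[t_3,\infty)$, letting the free choice of $t_3$ accommodate both convexity and the domination \eqref{july24}. Note also that simply lowering the exponent of the near-$0$ power is not an option: if the near-$0$ piece grows slower than $t^{(n-1)/(n-2)}$, then $\widehat B(t)\leq \widehat A_{n-1}(\widehat L t)$ fails as $t\to 0^+$ for every finite $\widehat L$, since $\widehat A_{n-1}(s)\approx s^{(n-1)/(n-2)}$ near zero. So the buffer layer really is needed, not just for tidiness.

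The remaining pieces (the $\widehat E$ splice, the supercritical $\widehat B$, the verification of \eqref{aug61}, and the freedom to enlarge the exponent $q$ in \eqref{th3} so that $q>n$) match the paper and are sound.
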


\begin{proof} \step 1 \emph{Construction of $\widehat A$}.
Denote by  $t_1$ the maximum among $1$, the constant $t_0$ appearing in inequalities \eqref{ass:EAn1} and \eqref{ass:BAn1}, and the lower bound for $t$ in the definition of the $\Delta_2$-condtion near infinity for the functions $B$ and $E$. Let us set 
$a= \frac {A(t_1)}{t_1}$, and 
define the Young function  $\widehat A$ as 
%
%
\begin{equation}\label{hatA}
\widehat A(t)=\begin{cases}at&\mbox{if $0\leq t<t_1$} \\
A(t)&\mbox{if $t \geq t_1$.}
\end{cases}
\end{equation}
Clearly, $\widehat A$ satisfies property  \eqref{Lhat:limAtt0} and
\begin{equation}\label{july2}
A(t) \leq \widehat A(t) \qquad \text{for $t \geq 0$.}
\end{equation}
Also, the convexity of  $A$  ensures that
\begin{align}\label{ineq:widehatA1}
\widehat A(t)\geq at\quad\text{for $t\geq0$.}
\end{align}
Since
$$
\widehat H_n(s)=\biggl(\int_0^s\biggl(\frac{t}{\widehat A(t)}\biggr)^\frac{1}{n-1}\,dt\biggr)^\frac{n-1}{n}\qquad\mbox{for $s\geq0$,}
$$
we deduce that
$$
\widehat H_n(s)\leq a^{-\frac1n}s^\frac{n-1}n\quad \text{for  $s\geq0$,}$$
whence
$$a^{\frac1{n-1}}t^\frac{n}{n-1}\leq {\widehat H}_n^{-1} (t)\quad \text{for  $t\geq0$.}$$
Inasmuch as  $\widehat A_n=\widehat A\circ {\widehat H}_n^{-1}$, the latter inequality and inequality \eqref{ineq:widehatA1} yield:
$$
\widehat A_n(t)\geq \widehat A(  (a^{\frac1{n-1}}t^\frac{n}{n-1})\geq   (at)^\frac{n}{n-1} \quad \text{for $t \geq 0$.}
$$
This shows that inequality \eqref{th1} holds for sufficiently large $\widehat L$.
\\ For later reference, also note that 
\begin{equation}
\widehat A_n(t)=(at)^{\frac n{n-1}}\quad \text{for $ t\in[0,t_1]$.}
\end{equation}
Next, we have that
\begin{equation}\label{july1}
A_n(t) \leq  \widehat A_n(t) \qquad \text{for $t\geq0$.}
\end{equation}
Indeed, inequality \eqref{july2} implies that
$$\widehat H_n(s) \leq H_n(s) \qquad \text{for $s \geq 0$.}$$
Thus, $H_n^{-1}(t) \leq \widehat H_n^{-1}(t)$ for $t \geq0$, whence inequality  \eqref{july1} follows, on making use  of \eqref{july2} again.
\\
Moreover, there exists $t_2\geq t_1$, depending on $n$ and $A$,   such that 
\begin{equation}\label{est:widehatAnAn2}
 \widehat A_n(t)\leq A_n(2t)\quad \text{for  $t\geq t_2$.}
\end{equation}
Actually, if $s\geq t_1$ and is sufficiently large, then  
\begin{align*}
\widehat H_n(s)=\biggl(\int_0^s\biggl(\frac{t}{\widehat A(t)}\biggr)^\frac{1}{n-1}\,dt\biggr)^\frac{n-1}{n}\geq \biggl(\int_{t_1}^s\biggl(\frac{t}{\widehat A(t)}\biggr)^\frac{1}{n-1}\,dt\biggr)^\frac{n-1}{n}=\biggl(\int_{t_1}^s\biggl(\frac{t}{ A(t)}\biggr)^\frac{1}{n-1}\,dt\biggr)^\frac{n-1}{n}\geq \frac12 H_n(s).
\end{align*}
Observe that the last inequality holds, for large $s$, thanks to assumption \eqref{divinf}.
%
 Hence,  $\widehat H_n^{-1}(t)\leq H_n^{-1}(2t)$ for sufficiently large  $t$ and thereby
$$
\widehat A_n(t)=\widehat A(\widehat H_n^{-1}(t))=A(\widehat H_n^{-1}(t))\leq A(H_n^{-1}(2t)))=A_n(2t) \quad \text{for  $t\geq t_2$,}
$$
provided that $t_2$ is sufficiently large. Inequality \eqref{est:widehatAnAn2} is thus established.

\step 2 \emph{Construction of $\widehat B$}. First, consider the case when \eqref{subcrit} and \eqref{ass:BAn1} hold.
 Since $B$ is a Young function, there exists  $t_3  \geq  t_2$, where $t_2$ is the number from Step 2,  such that $B(t_3)>A_{n-1}(t_1)$.  Define the Young function $\widehat B$ as
$$
\widehat B(t) = \begin{cases}
\widehat A_{n-1}(t)&\mbox{if $0\leq t <t_2$}
\\
\frac{t_3-t}{t_3- t_2}\widehat A_{n-1}(t_2)+\frac{t-t_2}{t_3-t_2}B(t_3)&\mbox{if $t_2\leq t <t_3$}
\\
B(t)&\mbox{if $t \geq t_3$.}
\end{cases}
$$
%
We claim that inequality \eqref{july24} holds with this choice of $\widehat B$, provided that 
$\widehat L$ is large enough.
If $t\in[0,t_2)$, the inequality in question is trivially satisfied with $\widehat L=1$. If $t\in[t_2,t_3)$, then
$$
\widehat B(t)\leq \widehat B(t_3)=B(t_3)\leq A_{n-1}(L t_3)\leq \widehat A_{n-1}(L t_3)\leq \widehat A_{n-1}((Lt_3/t_2) t),
$$ 
where the third inequality holds thanks to \eqref{july1}.
Finally, if $t> t_3$, then $$\widehat B(t)=B(t)\leq A_{n-1}(Lt)\leq \widehat A_{n-1}(Lt).$$ Altogether, inequality  \eqref{july24} is fulfilled with
$\widehat L=\max\big\{1, \frac{Lt_3}{t_2}\big\}$.
\\
In order to establish inequality \eqref{th3}, 
it suffices to show that $\widehat B$ satisfies the $\Delta_2$-condition globally. Since $\widehat B$ is a Young function,  this condition is in turn equivalent to the fact that there exists a constant $c$ such that 
\begin{equation}\label{aug60}
\frac{t \widehat B'(t)}{\widehat B (t)}\leq c \quad \text{for a.e.  $t>0$.}
\end{equation}
Since $B$ is a Young function satisfying the  $\Delta_2$-condition near infinity, and $\widehat B(t) =B(t)$ for large $t$, condition \eqref{aug60} certainly holds for large $t$. On the other hand, since 
$$\lim_{t\to0+}\frac{t \widehat B'(t)}{\widehat B(t)}=  \lim_{t\to0+}\frac{t \widehat A_{n-1}'(t)}{\widehat A_{n-1}(t)}=  \frac{n-1}{n-2},$$
condition \eqref{aug60} also holds for $t$ close to $0$. Hence, it holds for every $t>0$.
%
\\ Next, 
 consider the case when \eqref{supercrit} holds.
%
The $\Delta_2$-condition near infinity for $B$  implies that there exist constants $q>1$, $t_4>1$ and $c>0$ such that  with $B(t)\leq c t^q$ for all $t\geq t_4$. Since $t_4 >1$, we may suppose, without loss of generality,  that $q>n$. Since $B(t)\leq \widehat L(t^q+1)$ for  $t\geq0$, provided that $\widehat L$ is sufficiently large,  the choice $\widehat B(t)=\widehat L t^q$ makes inequalities \eqref{th3} and \eqref{th4} true.

\step 3 Construction of $\widehat E$. We define $\widehat E$ analogously to $\widehat B$, by replacing $B$ with $E$ and $\widehat A_{n-1}$ with $\widehat A_n$. The same argument as in Step~2 tells us that inequalities \eqref{th5} and  \eqref{th6} hold for a suitable choice of the constant $\widehat L$.
%


\step 4 Conclusion. Since
\begin{align*}
f(x,t,\xi)\leq B(|\xi|)+E(|t|)+L\leq \widehat B(|\xi|)+\widehat E(|\xi|)+B(t_3)+E({t_3})+L
\end{align*}
and
$$
f(x,t,\xi)\geq A(|\xi|)-E(|t|)-L\geq \widehat A(|\xi|)-\widehat E(|\xi|)-A(t_1)-E({t_3})-L,
$$
for a.e. $x\in \Omega$, and for every $t \in \R$ and $\xi \in \mathbb R^n$, equation \eqref{aug61} follows, provided that  $\widetilde L$ is chosen sufficiently large.
\end{proof}

%
%

We conclude this section by recalling the following classical lemma -- see e.g. \cite[Lemma 6.1]{Giu}

\begin{lemma}\label{L:holefilling}
Let $Z: [\rho,\sigma] \to [0, \infty)$ be a bounded   function. Assume that  there exist constants $a,b\geq0$, $\alpha>0$ and $\theta\in[0,1)$ such that
\begin{equation*}
 Z(r)\leq \theta Z(s)+(s-r)^{-\alpha} a+b \quad \text{if $\rho\leq r<s\leq \sigma$.}
\end{equation*}
 Then, 
\begin{equation*}
 Z(r)\leq c\big((s-r)^{-\alpha} a+b\big) \quad \text{if $\rho\leq r<s\leq \sigma$,}
\end{equation*}
for some constant $c=c(\alpha,\theta)>1$.
\end{lemma}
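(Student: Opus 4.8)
The plan is to run the standard iteration-of-radii argument. Fix $\rho \le r < s \le \sigma$ together with a parameter $\tau \in (0,1)$ to be chosen later, and define the increasing sequence $r_0 = r$ and $r_{i+1} = r_i + (1-\tau)\tau^i(s-r)$ for $i \ge 0$. Since $\sum_{i\ge 0}(1-\tau)\tau^i = 1$, one has $r_i \uparrow s$ and $r_i \in [r,s]\subset[\rho,\sigma]$ for every $i$, so the hypothesis may be applied along the sequence; moreover $r_{i+1}-r_i = (1-\tau)\tau^i(s-r)$.

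Applying the assumed inequality to the pair $(r_i,r_{i+1})$ gives, for every $i$,
\[
Z(r_i) \le \theta\, Z(r_{i+1}) + \frac{a}{(1-\tau)^\alpha \tau^{i\alpha}(s-r)^\alpha} + b .
\]
Iterating this relation from $i=0$ up to $i=k-1$ yields
\[
Z(r) \le \theta^k Z(r_k) + \frac{a}{(1-\tau)^\alpha(s-r)^\alpha}\sum_{i=0}^{k-1}\bigl(\theta\tau^{-\alpha}\bigr)^i + b\sum_{i=0}^{k-1}\theta^i .
\]

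The decisive step is the choice of $\tau$: take $\tau\in(0,1)$ with $\tau^\alpha>\theta$, for instance $\tau=\bigl(\tfrac{1+\theta}{2}\bigr)^{1/\alpha}$, so that $\theta\tau^{-\alpha}=\tfrac{2\theta}{1+\theta}\in[0,1)$ and both geometric series converge as $k\to\infty$. Since $Z$ is bounded and $\theta<1$, the term $\theta^k Z(r_k)$ tends to $0$; this is precisely where the boundedness hypothesis on $Z$ enters. Letting $k\to\infty$ we obtain
\[
Z(r) \le \frac{a}{(1-\theta\tau^{-\alpha})(1-\tau)^\alpha(s-r)^\alpha} + \frac{b}{1-\theta},
\]
which is the asserted bound with $c=\max\Bigl\{\tfrac{1}{(1-\theta\tau^{-\alpha})(1-\tau)^\alpha},\ \tfrac{1}{1-\theta}\Bigr\}$, a constant depending only on $\alpha$ and $\theta$ through the explicit choice of $\tau$.

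I do not expect a genuine obstacle here: the only points requiring care are checking that the sequence $\{r_i\}$ stays inside $[\rho,\sigma]$ so that the hypothesis is legitimately applicable at every step, and fixing $\tau=\tau(\alpha,\theta)$ so that $\theta\tau^{-\alpha}<1$; once that is done, the boundedness of $Z$ disposes of the remainder term and the two convergent geometric series produce the constant $c$.
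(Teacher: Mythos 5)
Your proof is correct and is precisely the classical argument for this lemma; the paper itself does not prove it but cites it as \cite[Lemma 6.1]{Giu}, whose proof runs exactly along the lines you lay out (geometric subdivision of $[r,s]$, choice of $\tau$ with $\theta\tau^{-\alpha}<1$, boundedness of $Z$ to discard the remainder term).
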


\section{Proof of Theorem~\ref{T}}\label{sec:proof}

We shall limit ourselves to proving Theorem~\ref{T}, since the content of Theorem \ref{Tbis} is just a special case of the former.
A key ingredient is provided by Lemma \ref{L:optim} below.
 In the statement, $\Phi_q : [0, \infty) \to [0, \infty)$ denotes the function defined  for $q \geq1$ as
\begin{equation}\label{phiq}
\Phi_q(t)=\begin{cases}
t&\mbox{if $0\leq t<1$}
\\
t^q&\mbox{if $t\geq 1$.}
\end{cases}
\end{equation}
One can verify that
\begin{equation}\label{july35}
\Phi_q(\lambda t) \leq \lambda ^q \Phi_q(t) \qquad \text{for $\lambda \geq 1$ and $t \geq 0$.}
\end{equation}
Moreover, given a function $u \in W^1K^A(\mathbb B_1)$, we set 
\begin{equation}\label{F}
F(u,\rho,\sigma)=\int_{\mathbb B_\sigma\setminus \mathbb B_\rho}A(|u|)+A(|\nabla u|)\,dx
\end{equation}
for $0<\rho <\sigma <1$.
\begin{lemma}\label{L:optim}
Let $A$ and $B$ be Young functions and $0<\rho <\sigma <1$.
\begin{itemize}
\item[(i)] Suppose that  condition \eqref{subcrit} is in force.  Assume that there exist constants $L \geq 1$ and $q>1$ such that 
\begin{equation}\label{L:optim:ass:c1}
B(t)\leq A_{n-1}(L t)\quad\mbox{and}\quad B(\lambda t)\leq \lambda^qB(t)\quad\mbox{for $t\geq 0$ and $\lambda\geq 1$}.
\end{equation}
Then, for every $u \in W^1K^A(\mathbb B_1)$ there exists a function $\eta\in W_0^{1,\infty}(\mathbb B_1)$ satisfying
\begin{equation}\label{L:optim:eta}
0\leq \eta\leq 1 \,\mbox{in $\mathbb B_{1}$},\quad \eta=1\,\mbox{in $\mathbb B_{\rho}$}, \quad \eta=0\,\mbox{in $\mathbb B_{1}\setminus\mathbb B_{\sigma}$}\quad\|\nabla \eta\|_{L^\infty(\mathbb B_1)}\leq \frac{2}{\sigma-\rho},
\end{equation}
and such that
\begin{align}\label{L:optim:claim}
\int_{\mathbb B_1}B(| u \nabla \eta(x)|)\,dx
\leq& c\,\Phi_q\biggl( \frac{\kappa F(u,\rho,\sigma)^\frac1{n-1}}{(\sigma-\rho)^{\frac{ n}{n-1}}\rho}\biggr)F(u,\rho,\sigma)
\end{align}
for some constant $c=c(n,q,L)\geq 1$.
Here, $\kappa$ denotes the constant appearing in inequality \eqref{ineq:sobsphere}.
\item[(ii)] Suppose that condition \eqref{convn-1} is in force. Assume that there exist constants $L\geq 1$ and $q>n$  such that
 \begin{equation}\label{sep1}
 B(t)\leq L t^q \qquad \text{ for all $t\geq0$.}
 \end{equation}
Then, for every $u \in W^1K^A(\mathbb B_1)$ there exists a function $\eta\in W_0^{1,\infty}(\mathbb B_1)$ satisfying conditions \eqref{L:optim:eta}, such that
\begin{align}\label{L:optim:claim:linfty}
\int_{\mathbb B_1}B(| u \nabla \eta(x)|)\,dx
\leq&  \frac{c  \kappa^q F(u,\rho,\sigma)^\frac{q}{n-1}}{(\sigma-\rho)^{q-1+\frac{q}{n-1}}\rho^{q- (n-1)}}
\end{align}
for some constant $c=c(n,q,L)\geq 1$.
 Here, $\kappa$ denotes the constant appearing in inequality \eqref{ineq:sobsphereinf}.
\end{itemize}
\end{lemma}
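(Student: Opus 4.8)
The plan is to construct $\eta$ by composing a scalar profile with a suitable radial function of $x$, in such a way that $|u\nabla\eta|$ is controlled on the annulus $\mathbb B_\sigma\setminus\mathbb B_\rho$ in terms of the $(n-1)$-dimensional Sobolev norm of $u$ restricted to spheres. Concretely, I would choose a Lipschitz cutoff $\psi:[0,1]\to[0,1]$ with $\psi\equiv 1$ on $[0,\rho]$, $\psi\equiv 0$ on $[\sigma,1]$, $|\psi'|\le 2/(\sigma-\rho)$, and then, following the div–curl-type construction of \cite{BC16} and the anti-Lavrentiev argument of \cite{KRS23}, modify the argument so that $\eta(x)=\psi(\Psi(x)|x|)$ (or an analogous composition) where $\Psi$ is built so that the level sets of $\eta$ avoid the regions where $|u|$ is large on the corresponding sphere. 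The point is that, on a sphere $\partial\mathbb B_r$ with $\rho<r<\sigma$, the trace $u(r\,\cdot\,)$ lives in $W^1K^A(\mathbb S^{n-1})$ after rescaling, and by Theorem~\ref{sobolev}(i) (resp.\ (ii) in case (ii)) it is controlled in $L^{A_{n-1}}(\mathbb S^{n-1})$ (resp.\ $L^\infty$) by $\big(\int_{\mathbb S^{n-1}}A(|u|)+A(|\nabla_{\mathbb S}u|)\big)^{1/(n-1)}$.

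The key steps, in order, would be: first, reduce to estimating $\int_{\mathbb B_\sigma\setminus\mathbb B_\rho}B(|u||\nabla\eta|)\,dx$ and split it as an integral over $r\in(\rho,\sigma)$ of spherical integrals $\int_{\partial\mathbb B_r}B(|u||\nabla\eta|)\,d\HN$; second, design $\eta$ so that on each sphere $\nabla\eta$ is ``spread out'' proportionally to where $|u|$ is comparable to its average, precisely so that the bad set $\{|u|\text{ large}\}$ on the sphere carries only a small fraction of the $\|\nabla\eta\|$ budget — this is exactly the optimization/redistribution trick, and it forces $|u||\nabla\eta|\lesssim \frac{1}{(\sigma-\rho)\,\rho}\,u^\circ_{\partial\mathbb B_r}(c_n/2)$-type quantities, which by Theorem~\ref{sobolev} are bounded by a dimensional constant times $\big(\int_{\partial\mathbb B_r}A(|u|)+A(|\nabla_\mathbb S u|)\big)^{1/(n-1)}$ divided by $(\sigma-\rho)\rho$; third, plug this pointwise bound into $B$, use the growth hypothesis \eqref{L:optim:ass:c1} — i.e.\ $B\le A_{n-1}(L\,\cdot)$ together with $B(\lambda t)\le\lambda^q B(t)$, which gives $B(\lambda t)\le\Phi_q(\lambda)B(t)$ for all $\lambda,t\ge 0$ — to pull out the scaling factor $\kappa F(u,\rho,\sigma)^{1/(n-1)}/((\sigma-\rho)^{n/(n-1)}\rho)$ as a $\Phi_q$ prefactor, and land on $A_{n-1}$ of the normalized trace; fourth, apply the Orlicz–Sobolev inequality on $\mathbb S^{n-1}$ in its integral form \eqref{ineq:sobsphere} to bound $\int_{\partial\mathbb B_r}A_{n-1}(\text{normalized }u)$ by $\int_{\partial\mathbb B_r}A(|u|)+A(|\nabla_\mathbb S u|)$; fifth, integrate in $r$ over $(\rho,\sigma)$, noting that $\int_\rho^\sigma\!\int_{\partial\mathbb B_r}A(|u|)+A(|\nabla_\mathbb S u|)\,d\HN\,dr\le F(u,\rho,\sigma)$ (since $|\nabla_\mathbb S u|\le|\nabla u|$), which produces the extra factor $F(u,\rho,\sigma)$ on the right-hand side of \eqref{L:optim:claim}. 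Part (ii) is the same scheme, but using \eqref{ineq:sobsphereinf} to get an $L^\infty$ bound on the traces and the pure power growth \eqref{sep1}, so that one tracks explicit powers of $\kappa$, $F$, $(\sigma-\rho)$ and $\rho$ rather than a $\Phi_q$; the bookkeeping of the exponents $q-1+\frac{q}{n-1}$ and $q-(n-1)$ comes from combining the factor $r^{n-1}\approx\rho^{n-1}$ in passing between $\mathbb S^{n-1}$ and $\partial\mathbb B_r$, the $q$-homogeneity of $B$, and the $r$-integration.

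The main obstacle I anticipate is making the redistribution construction of $\eta$ fully rigorous while keeping $\eta$ genuinely admissible: it must simultaneously be a radial-in-spirit cutoff satisfying \eqref{L:optim:eta} (so in particular $\|\nabla\eta\|_{L^\infty}\le 2/(\sigma-\rho)$ globally, not just in an averaged sense), yet have its gradient concentrated away from the super-level sets of $|u|$ on every sphere, uniformly in $r$. Achieving both requires choosing the profile's ``speed'' as a function adapted to the layer-cake decomposition of $u$ on spheres — essentially running the one-dimensional optimization of \cite{BC16,KRS23} fiberwise and checking measurability and the Lipschitz bound of the resulting $\eta$ on $\mathbb B_1$. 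A secondary technical point is handling the interplay between the normalization constant $M=\int_{\mathbb S^{n-1}}A(|u|)+A(|\nabla_\mathbb S u|)$ that varies with $r$: one must feed the $r$-dependent normalization into \eqref{ineq:sobsphere} (which is already stated in scale-invariant normalized form, so this is fine) and then control the resulting $r$-dependent quantities by the single scale $F(u,\rho,\sigma)$; here the monotonicity \eqref{monotone} of $t\mapsto A(t)/t$ and the $\Delta_2$/power bounds on $B$ are what let the estimate close without losing the sharp $(n-1)$-dimensional exponent.
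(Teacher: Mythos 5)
Your outline correctly identifies the supporting machinery (trace of $u$ to spheres, the Orlicz--Sobolev--Poincar\'e inequality on $\Sn$, the $\Phi_q$-scaling inequality $B(\lambda t)\le\Phi_q(\lambda)B(t)$, and the final integration in $r$), but the central idea you propose for building $\eta$ is different from --- and more problematic than --- what the paper actually does, and you implicitly flag the difficulty yourself.

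You want $\eta$ to have its gradient ``spread out'' \emph{angularly} on each sphere so as to avoid the regions where $|u|$ is large. This cannot be realised: any admissible cutoff with $\eta\equiv1$ on $\mathbb B_\rho$ and $\eta\equiv0$ outside $\mathbb B_\sigma$ must satisfy $\int_\rho^\sigma|\partial_r\eta(r\omega)|\,dr\ge1$ along \emph{every} ray $\omega\in\Sn$, so $\nabla\eta$ cannot be steered away from an arbitrary bad angular set uniformly in $r$; and if $\eta$ were made to depend on the angle, its tangential gradient would enter $|\nabla\eta|$ and make the $L^\infty$ bound in \eqref{L:optim:eta} essentially impossible to enforce without losing the sharp constant. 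The quantity $u^\circ_{\partial\mathbb B_r}(c_n/2)$ you invoke belongs to the proof of Theorem~\ref{sobolev}, not to the construction of $\eta$.

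The paper's construction is simpler and entirely \emph{radial}: one never chooses where on a sphere to put $\nabla\eta$, only \emph{which} spheres to put it on. By Fubini and a Chebyshev-type averaging argument, the set $U$ of radii $r\in(\rho,\sigma)$ for which both $\int_{\Sn}A(|\nabla_{\Sn}u_r|)\,d\HN$ and $\int_{\Sn}A(|u_r|)\,d\HN$ are at most $\tfrac{4}{(\sigma-\rho)r^{n-1}}$ times the corresponding annular integrals has measure at least $\tfrac12(\sigma-\rho)$. Then $\eta$ is taken radial with $-\eta'(r)=\tfrac1{|U|}\chi_U(r)$ (so $\|\nabla\eta\|_{L^\infty}\le 2/(\sigma-\rho)$ automatically, with no tangential component). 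Since $|\nabla\eta|$ is constant on each good sphere, one simply needs $\int_{\Sn}B\bigl(\tfrac{2}{\sigma-\rho}|u_r|\bigr)\,d\HN$, which after writing $\tfrac{2}{\sigma-\rho}|u_r|$ as $\lambda\cdot\tfrac{|u_r|}{L\kappa F_r^{1/(n-1)}}$ with $\lambda=\tfrac{2L\kappa F_r^{1/(n-1)}}{\sigma-\rho}$, applying \eqref{sep100} and $B\le A_{n-1}(L\,\cdot)$, then the integral form \eqref{ineq:sobsphere}, is bounded by $\Phi_q(\lambda)F_r(u)$; plugging in the bound $F_r(u)\lesssim F(u,\rho,\sigma)/((\sigma-\rho)r^{n-1})$ valid on $U$ and integrating over $U$ then gives \eqref{L:optim:claim}. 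Your steps after the construction of $\eta$ are essentially this, so once you replace the angular optimisation by the radial Chebyshev selection of the good set $U$ (which is the real content of the ``div--curl''/anti-Lavrentiev trick as transplanted here), the argument closes.
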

 
\begin{proof}
Let $u\in W^{1}K^A(\mathbb B_1)$. Define, for $r \in [0,1]$, the function $u_r:\mathbb S^{n-1}\to\R$ as $u_r(z)=u(rz)$ for $z \in \Sn$. 
By classical properties of restrictions of Sobolev functions to $(n-1)$-dimensional concentric spheres, one has that $u_r$ is a weakly differentiable function for a.e.  $r \in [0,1]$. Hence, by 
Fubini's theorem, there exists a  set $N\subset [0,1]$ such that $|N|=0$, and  $u_r\in W^1K^A(\mathbb S^{n-1})$ for every $r\in[0,1]\setminus N$. Set
\begin{equation}\label{def:U}
U_1=\biggl\{r\in[\rho,\sigma]\setminus N\,:\,\int_{\mathbb S^{n-1}}A(|\nabla_{\mathbb S} u_r(z)|)\,d\mathcal H^{n-1}(z)\leq \frac{4}{(\sigma-\rho)r^{n-1}}\int_{\mathbb B_{\sigma}\setminus \mathbb B_{\rho }}A(|\nabla u|)\dx\biggr\}.
\end{equation}
From Fubini's Theorem, the inequality $|\nabla_{\mathbb S} u_r(z)|\leq|\nabla u(rz)|$ for $\mathcal H^{n-1}$-a.e. $z\in\mathbb S^{n-1}$,  and the very definition of the set $U_1$ we infer that
\begin{align*}
\int_{\mathbb B_{\sigma } \setminus \mathbb B_{\rho }}A(|\nabla u|)\,dx=&\int_{\rho }^{\sigma }r^{n-1}\int_{\mathbb S^{n-1}}A(|\nabla u(r z)|)\,d\mathcal H^{n-1}(z)\,dr\\
\geq&\int_{(\rho ,\sigma )\setminus U_1}r^{n-1}\int_{\mathbb S^{n-1}}A(|\nabla_{\mathbb S} u_r(z)|)\,d\mathcal H^{n-1}(z)\,dr\\
>&\frac{4((\sigma-\rho) -|U_1|)}{(\sigma-\rho) }\int_{\mathbb B_{\sigma } \setminus \mathbb B_{\rho }}A(|\nabla u|)\,dx.
\end{align*}
Hence, $|U_1|\geq \frac34(\sigma-\rho) $. An analogous computation ensures that the set
\begin{equation}\label{def:U2}
U_2 =\biggl\{r\in[\rho ,\sigma ]\setminus N\,:\,\int_{\mathbb S^{n-1}}A(|u_r(z)|)\,d\mathcal H^{n-1}(z)\leq \frac{4}{(\sigma-\rho)r^{n-1}}\int_{\mathbb B_{\sigma }\setminus \mathbb B_{\rho }}A(|u|)\,dx\biggr\}
\end{equation}
has the property that $|U_2|\geq \frac34(\sigma-\rho) $. Thereby, if we define the set $$U=U_1\cap U_2,$$ then
\begin{equation}\label{bound:lowerU}
|U|\geq |(\rho ,\sigma )|-|(\rho , \sigma )\setminus U_1|-|(\rho , \sigma )\setminus U_2|\geq \frac12(\sigma-\rho) .\end{equation}
Next, define the function $\eta : \mathbb B_1 \to [0,1]$ 
as
%
$$ \eta(x)=\begin{cases}1&\mbox{if $0 \leq |x| <\rho$}\\\displaystyle \frac{1}{|U|}\int_{|x|}^{\sigma}\chi_{U}(s)\,ds&\mbox{if $\rho \leq |x| \leq \sigma $}\\
0&\mbox{if $\sigma < |x| \leq 1$.}\end{cases}
$$
One has  that $0\leq \eta\leq1$, $\eta=1$ in $\mathbb B_{\rho }$, $\eta=0$ in $\mathbb B_{1}\setminus \mathbb B_{\sigma}$, $\eta\in W_0^{1,\infty}(\mathbb B_1)$ and 
%
\begin{equation}
|\nabla \eta(rz)|=\begin{cases}0&\mbox{for a.e. $r\notin U$}\\\frac1{|U|}&\mbox{for a.e. $r\in U$,}
\end{cases}
\end{equation}
and for $z\in \mathbb S^{n-1}$.
Hence, the function $\eta$ satisfies the properties claimed in  \eqref{L:optim:eta}.
%
\\ Next, 
set, for $r  \in [0,1]\setminus N$,
\begin{equation}
F_r(u) =\int_{\mathbb S^{n-1}}A(|u_r(z)|)+A(|\nabla_{\mathbb S} u_r(z)|)\,d\mathcal H^{n-1}(z).
\end{equation}
By the definition of the set $U$,
\begin{equation}\label{est:FronU}
F_r(u)\leq \frac{4}{(\sigma-\rho)r^{n-1}}F(u,\rho,\sigma) \quad \text{for a.e. $r\in U$.}
\end{equation}
We have now to make use of different inequalities, depending on whether we deal we  case (i) or (ii).
\\ Case (i). Owing to inequality \eqref{Ak} and to the second inequality in \eqref{L:optim:ass:c1}, 
\begin{equation}\label{sep100}
B(\lambda t) \leq \Phi_q(\lambda) B(t)\qquad \quad \text{for $\lambda \geq 0$ and $t \geq 0$.} 
\end{equation}
The following chain holds:
\begin{align}\label{sep2}
\int_{\mathbb B_1}B(| u \nabla \eta(x)|)\,dx
\leq &\int_U r^{n-1}\int_{\mathbb S^{n-1}}B\biggl(\biggl| \frac{2}{(\sigma - \rho)}u_r (z)\biggr|\biggr)\,d\mathcal H^{n-1}(z)\,dr\\ \nonumber
=&\int_U r^{n-1}\int_{\mathbb S^{n-1}}B\biggl(\biggl|\frac{2\kappa  u_r(z) F_r(u)^\frac1{n-1}}{\kappa(\sigma-\rho) F_r(u)^\frac1{n-1}}\biggr|\biggr)\,d\mathcal H^{n-1}(z)\,dr\\
\nonumber
 \leq&\int_U r^{n-1}\Phi_q\biggl(\biggl|\frac{2L \kappa  F_r(u)^\frac1{n-1}}{(\sigma-\rho) }\biggr|\biggr)\int_{\mathbb S^{n-1}}A_{n-1}\biggl(\biggl|\frac{u_r(z) }{\kappa F_r(u)^\frac1{n-1}}\biggr|\biggr)\,d\mathcal H^{n-1}(z)\,dr\\
\nonumber
 \leq&\int_U r^{n-1}\Phi_q\biggl(\biggl|\frac{2L\kappa F_r(u)^\frac1{n-1}}{(\sigma-\rho)r }\biggr|\biggr)F_r(u)\,dr\\
\nonumber
\leq&  \Phi_q\biggl(\biggl|\frac{2L\kappa 4^{\frac{1}{n-1}}F(u,\rho,\sigma)^\frac1{n-1}}{(\sigma-\rho)^{1+\frac1{n-1}}\rho }\biggr|\biggr)4F(u,\rho,\sigma),
\end{align}
where the second inequality holds by inequality \eqref{sep100} and the first inequality  in \eqref{L:optim:ass:c1}, the third inequality follows from the Sobolev  inequality \eqref{ineq:sobsphere}, and the last  inequality relies upon inequality 
\eqref{est:FronU} and the fact that $|U|\leq(\sigma-\rho)$. Clearly, inequality \eqref{L:optim:claim} follows from \eqref{sep2}. 
\\ Case (ii). The following chain holds:
\begin{align}\label{sep3}
\int_{\mathbb B_1}B(| u \nabla \eta(x)|)\,dx
\leq & L\int_U r^{n-1}\int_{\mathbb S^{n-1}}\biggl| \frac{2}{(\sigma - \rho)}u_r (z)\biggr|^q\,d\mathcal H^{n-1}(z)\,dr\\ \nonumber
 \leq&\frac{L2^q c_n\kappa^q}{(\sigma-\rho)^q}\int_U r^{n-1}F_r(u)^\frac{q}{n-1}\,dr\\ \nonumber
\leq&\frac{L2^{q}c_n\kappa^q}{(\sigma-\rho)^q}\int_U r^{n-1}\biggl(\frac{4 F(u,\rho,\sigma)}{(\sigma-\rho)r^{n-1}}\biggr)^\frac{q}{n-1}\,dr\\ \nonumber
\leq&\frac{L2^{q}4^{\frac{q}{n-1}}c_n\kappa^q}{(\sigma-\rho)^{q-1+\frac{q}{n-1}}{\rho^{q-(n-1)}}}F(u,\rho,\sigma)^\frac{q}{n-1},
\end{align}
where $c_n$ is given by \eqref{cn}, the first inequality holds by inequality \eqref{sep1},  the second one by inequality \eqref{ineq:sobsphereinf}, the third one by inequality \eqref{est:FronU}, and the last one since  $|U|\leq(\sigma-\rho)$. 
Inequality \eqref{L:optim:claim:linfty} follows via \eqref{sep3}.
\end{proof}

We are now in a position to accomplish the proof of our main result.

\begin{proof}[Proof of Theorem~\ref{T}]

Owing to Lemma~\ref{L:widehateE},  without loss of generality we can assume that  the functions $A$, $B$ and $E$ also satisfy the properties stated for the functions $\widehat A$, $\widehat B$ and $\widehat E$ in the lemma. When we refer to properties in the statement of this lemma, we shall mean that they are applied directly to $A$, $B$ and $E$. In particular, $q$ denotes the exponent appearing in the statement of the lemma. Moreover, $Q$ is the constant from the definition of quasi-minimizer.\\ 
 We also assume that $\mathbb B_1\Subset\Omega$ and prove that $u$ is bounded in $\mathbb B_\frac12$. The general case follows via a  standard scaling and translation argument. For ease of presentation, we split the proof in steps.

\smallskip
\noindent \step 1 \emph{Basic energy estimate.}
\\ Set, for $r>0$ and $l>0$,
\begin{equation}
\mathcal A_{l,r}=\mathbb B_r\cap \{x\in\Omega\, :\,u(x)>l\}
\end{equation}
 and 
\begin{equation}\label{def:Jlr}
J(l,r) =\int_{\mathbb B_r}A((u-l)_+)+A(|\nabla(u-l)_+|)\,dx.
\end{equation}
Here, the subscript \lq\lq$+$'' stands for the positive part.
\\ If assumption \eqref{subcrit} holds, then
we claim that there exists a constant $c=c(n,q,L, Q)\geq 1$ such that
\begin{align}\label{est:substep11}
 \int_{\mathbb B_\rho} A(|\nabla (u-k)_+|)\,dx\leq& c\biggl(\frac{\Phi_q(\kappa J(k,\sigma)^\frac1{n-1})}{(\sigma-\rho)^{\frac{qn}{n-1}}}J(k,\sigma)+\int_{\mathcal A_{k,\sigma}}(E(|u|)+1)\,dx\biggr)
\end{align}
for $k\geq0$ and $\frac12\leq \rho<\sigma<1$, where $\kappa$ denotes the constant from inequality \eqref{ineq:sobsphere}
\\ If assumption \eqref{supercrit} holds,  then we claim that there exists a constant $c=c(n,q,L, Q)\geq 1$ such that
\begin{align}\label{est:substep11:case2}
 \int_{\mathbb B_\rho} A(|\nabla (u-k)_+|)\,dx\leq& c\biggl(\frac{\kappa^q }{(\sigma-\rho)^{\frac{qn}{n-1}}}J(k,\sigma)^\frac{q}{n-1}+\int_{\mathcal A_{k,\sigma}}(E(|u|)+1)\,dx\biggr)
\end{align}
for $k\geq0$ and $\frac12\leq \rho<\sigma<1$, where $\kappa$ denotes the contstant from inequality \eqref{ineq:sobsphereinf}.
\\ We shall first establish inequalities \eqref{est:substep11}  and \eqref{est:substep11:case2} under assumption \eqref{struct2}.
\\
Given $k \geq 0$ and $\frac12\leq \rho<\sigma\leq1$, let $\eta\in W_0^{1,\infty}(\mathbb B_1)$ be as in the statement of 
Lemma \ref{L:optim}, applied with $u$ replaced with $(u-k)_+$.
Choose the function $\varphi =-\eta^{q}(u-k)_+$ in the definition of quasi-minimizer for $u$. From this definition and the first property in \eqref{struct2} one infers that
\begin{align*}
\int_{\mathcal A_{k,\sigma}}f(x,u,\nabla u)\,dx& \leq Q \int_{\mathcal A_{k,\sigma}}f(x,u+\varphi,\nabla (u+\varphi))\,dx\\
& = Q \int_{\mathcal A_{k,\sigma}}f(x, u +\varphi,(1-\eta^{q})\nabla u-{q}\eta^{{q}-1}\nabla \eta(u-k))\,dx\\
& \leq Q\int_{\mathcal A_{k,\sigma}}(1-\eta^{q})f(x,u+\varphi,\nabla u)+\eta^{q}f\biggl(x,u+\varphi,-\frac{{q} \nabla\eta}{\eta} (u-k)\biggr)\,dx\,.
\end{align*}
Hence, since $0\leq u+\varphi\leq u$ on $\mathcal A_{k,\sigma}$, the second property in \eqref{struct2},  the upper bound in \eqref{ass1}, and the monotonicity of the function $E$  ensure that
\begin{align}\label{est:cacc1:pre}
\int_{\mathcal A_{k,\sigma}}f(x,u,\nabla u)\,dx 
  \leq Q \int_{\mathcal A_{k,\sigma}}(1-\eta^{q})\big(Lf(x,u,\nabla u)+E(u)+L\big)+\eta^{q}\bigg(B\biggl(\frac{{q} |\nabla\eta|}{\eta} (u-k)\biggr)+ E(u) + L\bigg) \,dx.
\end{align}
Inasnnuch as $0\leq \eta\leq1 $ and $\eta=1$ in $\mathbb B_\rho$, the use of inequality  \eqref{th3} on the right-hand side of   \eqref{est:cacc1:pre} yields:
\begin{align}\label{est:cacc1}
\int_{\mathcal A_{k,\sigma}}f(x,u,\nabla u)\,dx\leq QL\int_{\mathcal A_{k,\sigma}\setminus \mathbb B_\rho}f(x,u,\nabla u)\,dx 
+Q \int_{\mathcal A_{k,\sigma}}{q}^{q}B\big(|\nabla \eta|(u-k)\big)+   E(|u|)+  L\,dx.
\end{align}
Now, suppose  that assumption \eqref{subcrit} holds.
Combining inequality \eqref{est:cacc1} with estimate \eqref{L:optim:claim} (applied to $(u-k)_+$) tells us that
\begin{align}\label{est:cacc2}
\int_{\mathcal A_{k,\rho}}f(x,u,\nabla u)\,dx \leq& QL\int_{\mathcal A_{k,\sigma}\setminus \mathbb B_\rho}f(x,u,\nabla u)\,dx+cQ\Phi_q\biggl( \frac{2 \kappa J(k,\sigma)^\frac1{n-1}}{(\sigma-\rho)^{\frac{ n}{n-1}}}\biggr)J(k,\sigma)+Q \int_{\mathcal A_{k,\sigma}}(E(u)+L)\,dx
\end{align}
for some constant $c=c(n,{q},L)\geq 1$. Observe that in deriving inequality \eqref{est:cacc2}, we have exploited 
the inequalities $ \frac 12 \leq \rho$ and $F((u-k)_+, \rho, \sigma) \leq J(k,\sigma)$. 
\\ 
Adding the expression $QL\int_{\mathcal A_{k,\rho}}f(x,u, \nabla u)\,dx $ to both sides of inequality \eqref{est:cacc2} and using inequality \eqref{july35} enable one to deduce that
\begin{align*}
\int_{\mathcal A_{k,\rho}}f(x,u,\nabla u)\,dx\leq \frac{QL}{QL+1} \int_{\mathcal A_{k,\sigma}}f(x,u,\nabla u)\,dx+c\biggl(\frac{\Phi_q\big(\kappa J(k,\sigma)^\frac1{n-1}\big)}{(\sigma-\rho)^{\frac{{q}n}{n-1}}}J(k,\sigma)+\int_{\mathcal A_{k,\sigma}}(E(u)+1)\,dx\biggr),
\end{align*}
for some constant  $c=c(n,{q}, L, Q)\geq 1$. Estimate \eqref{est:substep11} follows via Lemma~\ref{L:holefilling} and the lower bound in \eqref{ass1}. 
\\ Assume next that assumtpion \eqref{supercrit} holds. Hence, the full assumption \eqref{convn-1} holds, thanks to equation \eqref{Lhat:limAtt0}. One can start again from \eqref{est:cacc1}, make use of  inequality \eqref{L:optim:claim:linfty}, and argue as above to obtain inequality \eqref{est:substep11:case2}. The fact that 
$$\frac{1}{(\sigma-\rho)^{q-1+\frac{q}{n-1}}}\leq \frac1{(\sigma-\rho)^{\frac{qn}{n-1}}},$$
since $\sigma-\rho\leq 1$,  is relevant in this argument.
\\ It remains to prove inequalities  \eqref{est:substep11}  and \eqref{est:substep11:case2}  under the alternative structure condition \eqref{struct1}.
\\ Let  $\varphi$ be as above, and observe that $u+\varphi=\eta^q k +(1-\eta^q)u$ on $\mathcal A_{k,\sigma}$. Hence,  by property \eqref{struct1},
\begin{align*}
\int_{\mathcal A_{k,\sigma}}f(x,u,\nabla u)\,dx\leq& Q \int_{\mathcal A_{k,\sigma}}f(x,u+\varphi,\nabla (u+\varphi))\,dx\\
=& Q \int_{\mathcal A_{k,\sigma}}f\big(x, (1-\eta^{q})u +\eta^qk,(1-\eta^{q})\nabla u-{q}\eta^{{q}-1}\nabla \eta(u-k)\big)\,dx\\
\leq& Q\int_{\mathcal A_{k,\sigma}}(1-\eta^{q})f(x,u,\nabla u)+\eta^{q}f\biggl(x,k,-\frac{{q} \nabla\eta}{\eta} (u-k)\biggr)\,dx.
\end{align*}
Thanks to assumption \eqref{ass1} and the monotonicity of $E$, which guarantees that  $E(k)\leq E(u)$ in $\mathcal A_{k,\sigma}$, we obtain that
\begin{align}\label{sep9}
\int_{\mathcal A_{k,\sigma}}f(x,u,\nabla u)\,dx\leq& Q\int_{\mathcal A_{k,\sigma}}(1-\eta^{q})f(x,u,\nabla u)+\eta^{q}(L+E(u)+B\biggl(\frac{{q} |\nabla\eta|}{\eta} (u-k)\biggr)\,dx.
\end{align}
A replacement of inequality  \eqref{est:cacc1:pre} with \eqref{sep9} and an analogous argument as above yields the same conclusions.

\step 2 \emph{One-step improvement.}
\\
Let us set $$c_B= \max\{\kappa, 1\}, $$
where $\kappa$ denotes a constant, depending only on $n$, such that  inequality \eqref{est:sobolevn} holds for every $r\in [\tfrac 12, 1]$.
We claim that, if $h>0$ is such that
 \begin{equation}\label{ass:Jhsigma}
c_BLJ(h,\sigma)^\frac1n\leq1,
\end{equation} 
then
%
\begin{align}\label{est:onestep}
J(k,\rho)\leq& c\biggl(\frac{1}{(\sigma-\rho)^{\frac{q n}{n-1}}}+\frac{1}{(k-h)^\frac{n}{n-1}}+L^{\log_2(\frac{k}{k-h})}\biggr)J(h,\sigma)^{1+\frac1{n}} \qquad \text{if $k >h$,}
\end{align}
for a suitable  constant $c=c(n,q, L,Q,A)\geq 1$.
%
%
%
%
\\ To this purpose, fix $h>0$ such that inequality \eqref{ass:Jhsigma} holds. We begin by showing that there exists a constant $c=c(n,L)$ such that 
\begin{equation}\label{est:Aksigma}
|\mathcal A_{k,\sigma}|\leq c\frac{J(h,\sigma)^\frac{n+1}n}{(k-h)^\frac{n}{n-1}} \qquad \text{if $k >h$.}
\end{equation}
Inequality \eqref{est:Aksigma} is a consequence of the following chain:
\begin{align}\label{est:onestep1b}
|\mathcal A_{k,\sigma}|A_n(k-h)=&\int_{\mathcal A_{k,\sigma}}A_n(k-h)\,dx 
\leq \int_{\mathcal A_{h,\sigma}}A_n(u-h)\,dx\\
\leq&\int_{\mathcal A_{h,\sigma}}A_n\biggl(\frac{c_B (u-h) J(h,\sigma)^\frac1n}{c_BJ(h,\sigma)^\frac1n}\biggr)\,dx 
\leq c_BJ(h,\sigma)^\frac1n\int_{\mathcal A_{h,\sigma}}A_n\biggl(\frac{ u-h }{c_BJ(h,\sigma)^\frac1n}\biggr)\,dx. \notag
\end{align}
Notice that 
 the last inequality holds thanks to inequality \eqref{Ak}, applied with $A$ replaced with $A_n$, and to assumption \eqref{ass:Jhsigma}. Coupling inequality  \eqref{est:onestep1b}   with   inequality \eqref{est:sobolevn} enables us to deduce that
\begin{align*}
|\mathcal A_{k,\sigma}|\leq&\frac{c_B J(h,\sigma)^{\frac{n+1}n}}{A_n(k-h)}.
\end{align*}
Hence inequality \eqref{est:Aksigma} follows, via  \eqref{th1}.
\\ Next, 
by the monotonicity of $E$ and  assumption \eqref{th5},  
\begin{align}\label{sep12}
\int_{\mathcal A_{k,\sigma}}E(u)\,dx=&\int_{\mathcal A_{k,\sigma}}E((u-k)+k)\,dx\leq \int_{\mathcal A_{k,\sigma}}E(2(u-k))+E(2k)\,dx\\\leq& L \int_{\mathcal A_{k,\sigma}}E(u-k)+E(k)\,dx \quad \text{for $k>0$.}\nonumber
\end{align}
From inequality \eqref{Ak} applied to $A_n$ and assumption \eqref{ass:Jhsigma} one infers that
\begin{align}\label{est:Eu-k}
\int_{\mathcal A_{k,\sigma}}E(u-k)\,dx\leq& \int_{\mathcal A_{h,\sigma}}E(u-h)\,dx\leq \int_{\mathcal A_{h,\sigma}}A_n(L(u-h))\,dx\\ \notag
\leq&c_BLJ(h,\sigma)^\frac1n \int_{\mathcal A_{h,\sigma}}A_n\biggl(\frac{u-h}{c_B J(h,\sigma)^\frac1n}\biggr)\,dx\leq c_B L J(h,\sigma)^{1+\frac1n}\, \quad \text{if $k >h$.}
\end{align}
Owing to  assumption \eqref{th5} and chain \eqref{est:Eu-k}, 
%
\begin{align}\label{sep13}
\int_{\mathcal A_{k,\sigma}}E(k)=& E\biggl(\frac{k}{k-h}(k-h)\biggr)|\mathcal A_{k,\sigma}|\le E\biggl(2^{\lfloor \log_2\frac{k}{k-h}\rfloor +1}(k-h)\biggr)|\mathcal A_{k,\sigma}|\\  
\leq& L^{\log_2(\frac{k}{k-h})+1}E(k-h)|\mathcal A_{k,\sigma}|\leq L^{\log_2(\frac{k}{k-h})+1}\int_{\mathcal A_{h,\sigma}}E(u-h)\,dx\notag\\
\leq&L^{\log_2(\frac{k}{k-h})+1}c_B L J(h,\sigma)^{1+\frac1n} \quad \text{if $k >h$,} \nonumber
\end{align}
where $\lfloor\, \cdot\, \rfloor$ stands for integer part.
Combining inequalities \eqref{sep12}--\eqref{sep13} yields:
\begin{equation}\label{est:Eufull}
\int_{\mathcal A_{k,\sigma}}E(u)\,dx\leq cL^{\log_2(\frac{k}{k-h})}J(h,\sigma)^{\frac{n+1}n}  \quad \text{if $k >h$,}
\end{equation}
for some constant $c=c(n,L)$.
\\ From this point, the argument slightly differs depending on whether condition \eqref{subcrit} or \eqref{convn-1} holds.
\\ Assume first that   \eqref{subcrit} is in force.
Assumption \eqref{ass:Jhsigma} implies that there exists a constant $c=c(n,q, L)$ such that
\begin{equation}\label{sep14}
\Phi_q(\kappa J(k,\sigma)^\frac1{n-1})\leq cJ(k,\sigma)^\frac1{n-1} \quad \text{if $k>h$,}
\end{equation}
where $\kappa$ is the constant from inequality \eqref{ineq:sobsphere}. 
Making use of inequalities \eqref{est:Aksigma}, \eqref{est:Eufull} and \eqref{sep14} to estimate the right-hand side of \eqref{est:substep11} results in the following bound for its left-hand side:
\begin{align}\label{est:onestep1}
\int_{\mathbb B_\rho} A(|\nabla (u-k)_+|)\,dx&\leq c\Bigg(\frac{J(h,\sigma)^{\frac{n}{n-1}}}{(\sigma-\rho)^{\frac{q n}{n-1}}}+\frac{J(h,\sigma)^\frac{n+1}n}{(k-h)^\frac{n}{n-1}}+L^{\log_2(\frac{k}{k-h})}J(h,\sigma)^{\frac{n+1}n}\Bigg)
\\ \nonumber & \leq 
c' 
 \Bigg(\frac{1}{(\sigma-\rho)^{\frac{q n}{n-1}}}+\frac{1}{(k-h)^\frac{n}{n-1}}+L^{\log_2(\frac{k}{k-h})}\Bigg)J(h,\sigma)^\frac{n+1}n \quad \text{if $k >h$,}
\end{align}
for suitable constants $c=c(n,q,L,Q)\geq 1$ and $c'=c'(n,q, L, Q)\geq 1$. 
From inequality \eqref{july33} we infer that
\begin{align}\label{est:onestep3}
 \int_{\mathbb B_\rho}A((u-k)_+)\,dx\leq \int_{\mathbb B_\rho}A_n((u-k)_+)\,dx+c|\mathcal A_{k,\rho}|\leq \int_{\mathbb B_\sigma}A_n((u-h)_+)\,dx+c|\mathcal A_{k,\sigma}| \quad \text{if $k >h$,}
\end{align}
for some constant $c=c(n,A)$.
 A combination of the latter inequality with \eqref{est:Aksigma} and \eqref{est:onestep1b} tells us that
\begin{align}\label{est:onestep2}
 \int_{\mathbb B_\rho}A((u-k)_+)\,dx\leq c J(h,\sigma)^{1+\frac1n}+c\frac{J(h,\sigma)^{1+\frac1n}}{(k-h)^\frac{n}{n-1}} \quad \text{if $k >h$,}
 \end{align}
 for some constant $c=c(n,L,A)$.
Coupling inequaliy \eqref{est:onestep1} with \eqref{est:onestep2} yields  \eqref{est:onestep}.
\\ Assume now that condition \eqref{convn-1}  holds. Assumption \eqref{ass:Jhsigma} and the inequality $q>n$ guarantee that there exists a constant $c=c(n,q,L)$ such that
\begin{equation}\label{sep32}
J(k, \sigma)^{\frac q{n-1}} \leq c J(k, \sigma)^{\frac {n+1}{n}} \quad \text{if $k >h$.}
\end{equation}
From inequalities  \eqref{est:Aksigma}, \eqref{est:Eufull} and \eqref{sep32} one obtains \eqref{est:onestep1} also in this case. Inequality  \eqref{est:onestep} again follows via  \eqref{est:onestep1} and  \eqref{est:onestep2}.

\step 3 \emph{Iteration.}
\\
Given $K\geq1$ and $\ell\in\mathbb N\cup\{0\}$, set
\begin{equation}\label{def:kell}
 k_\ell =K(1-2^{-(\ell+1)}),\quad \sigma_\ell=\frac12+\frac1{2^{\ell+2}},\quad\mbox{and}\quad J_\ell=J(k_\ell,\sigma_\ell).
\end{equation}
Thanks to inequality \eqref{est:onestep},   if $\ell\in\mathbb N$ is such that
\begin{equation}
c_B L J_{\ell}^\frac1n\leq 1,
\end{equation}
then
\begin{align}\label{est:iteration0}
J_{\ell+1}\leq& c\biggl(2^{\ell\frac{q n}{n-1}}+K^{-\frac{n}{n-1}}2^{\ell\frac{n}{n-1}}+L^{\ell}\biggr)J_{\ell}^{1+\frac1{n}}
\end{align}
for a suitable constant $c=c(n,q,L,Q,A)\geq 1$. Clearly, inequality \eqref{est:iteration0} implies that
\begin{align}\label{est:iteration}
J_{\ell+1}\leq& c_22^{\gamma \ell}J_{\ell}^{1+\frac1{n}}
\end{align}
where $\gamma=\max\{q \frac{n}{n-1},\log_2 L\}$ and $c_2=c_2(n,q,L,Q,A)\geq 1$ is a suitable constant. Let
$\tau=\tau(n,q,L,Q,A)\in(0,1)$ be such that
\begin{equation}\label{def:tau}
 c_22^{\gamma}\tau^{\frac1{n}}=1.
\end{equation}
Set 
$$\e_0=\min\{(c_BL)^{-n},\tau^n\} .$$
We claim that, if
\begin{equation}\label{start:small}
J_0\leq\e_0,
\end{equation}
then
\begin{equation}\label{ass:iterationJell}
 J_\ell\leq \tau^\ell J_0\qquad\mbox{for every $\ell\in\mathbb N\cup\{0\}$}.
\end{equation}
We prove this claim  by induction. The case $\ell=0$ is trivial.  Suppose that inequality \eqref{ass:iterationJell} holds for some $\ell\in \mathbb N$. Assumption \eqref{start:small} entails that
$$
c_B L J_\ell^\frac1n\leq c_BL(\tau^\ell J_0)^\frac1n\leq c_BL\e_0^\frac1n\leq1.
$$
Therefore, thanks to equations \eqref{est:iteration}, \eqref{ass:iterationJell}, and \eqref{def:tau}, 
\begin{align}\label{sep20}
J_{\ell+1} \leq& c_2 2^{\gamma \ell}J_{\ell}^{1+\frac1{n}} \leq c_2 (2^\gamma \tau^\frac1n)^\ell J_0^\frac1n (\tau^\ell J_0) \le c_2^{1-\ell}\e_0^\frac1n \tau^\ell J_0\leq \tau^{\ell+1}J_0.
\end{align}
 Notice that 
the last inequality holds thanks to the inequalities $c_2\geq1$, $\ell\geq1$, and $\e_0\leq\tau^n$. Inequality \eqref{ass:iterationJell}, with $\ell$ replaced with $\ell +1$, follows from \eqref{sep20}.

\step 4 \emph{Assumption \eqref{start:small} holds for large $K$.}
\\ Since
$$
J_0=J(K/2,\mathbb B_\frac34),
$$ 
 inequality  \eqref{start:small} will follow, for sufficiently large $K$, if we show that
\begin{equation}\label{start:small:lim}
\lim_{k\to\infty} J(k,\mathbb B_\frac34)=0.
\end{equation}
%
Inasmuch as $u\in V^1_{\rm loc}K^A(\Omega)$, from inclusion \eqref{sep21} we
infer  that $\lim_{k\to\infty}|\mathcal A_{k,\frac34}|=0$. Hence, the dominated convergence theorem guarantees that
\begin{equation}\label{start:small:lim:1}
\lim_{k\to\infty} \int_{\mathbb B_\frac34}A(|\nabla (u-k)_+|)\,dx=\lim_{k\to\infty} \int_{\mathcal A_{k,\frac34}}A(|\nabla (u-k)_+|)\,dx=0.
\end{equation}
It thus suffices  to show that
\begin{equation}\label{sep24}
\lim_{k\to\infty} \int_{\mathbb B_\frac34}A(| (u-k)_+|)\,dx=0.
\end{equation}
To this purpose, note that, by inequality \eqref{july33} and the monotonicity  of $A_n$,  
\begin{align}\label{start:small:lim:2:a}
\int_{\mathbb B_\frac34}A(| (u-k)_+|)\,dx 
&\leq c |\mathcal A_{k,\frac34}|+ \int_{\mathbb B_\frac34}A_n(| (u-k)_+|)\,dx\\ \notag
&\leq c|\mathcal A_{k,\frac34}|+  \int_{\mathbb B_\frac34}A_n\bigg(2\bigg| (u-k)_+-\fint_{\mathbb B_\frac34}(u-k)_+dy\bigg|\bigg)\,dx+
 \int_{\mathbb B_\frac34}A_n\bigg(2\bigg|\fint_{\mathbb B_\frac34}(u-k)_+dy\bigg|\bigg)\,dx
\end{align}
for some constant $c=c(n,A)$. Moreover,
%
%
$$
\lim_{k\to\infty} \mathcal A_{k,\frac34} =0,$$
and 
$$\lim_{k\to\infty}\int_{\mathbb B_\frac34}A_n\biggl(2\biggl|\fint_{\mathbb B_\frac34}(u-k)_+\biggr|\biggr)\,dx\leq\lim_{k\to\infty} |\mathbb B_\frac34|A_n\biggl(\frac{2\|(u-k)_+\|_{L^1(\mathbb B_\frac34)}}{|\mathbb B_\frac34|}\biggr)=0.
$$
 It remains to prove  that the second addend on the rightmost side of chain \eqref{start:small:lim:2:a} vanishes when $k\to \infty$. Thanks to the limit in 
\eqref{start:small:lim:1},  for every $\delta>0$ there exists $k_\delta\in\mathbb N$ such that
\begin{equation}\label{sep137}
\int_{\mathbb B_\frac34}A(|\nabla (u-k)_+|)\,dx\leq \delta\qquad\mbox{if $k\geq k_\delta$.}
\end{equation}
Choose $\delta$ in \eqref{sep137} such that $2c_B\delta^\frac1n\leq 1$. Property \eqref{Ak} applied to $A_n$, and  the Sobolev-Poincar\'e inequality in Orlicz spaces \eqref{SP} applied to the function $(u-k)_+$ ensure that, if $k>k_\delta$, then
\begin{align*}
\int_{\mathbb B_\frac34}A_n\bigg(2\bigg| (u-k)_+-\fint_{\mathbb B_\frac34}(u-k)_+\bigg|\bigg)\,dx
\leq&2c_B\delta^\frac1n \int_{\mathbb B_\frac34}A_n\Bigg(\frac{\big| (u-k)_+-\fint_{\mathbb B_\frac34}(u-k)_+dy\big|}{c_B \big(\int_{\mathbb B_\frac34}A(|\nabla (u-k)_+|)dy\big)^\frac1n}\Bigg)\,dx\\
\leq& 2c_B\delta^\frac1n  \int_{\mathbb B_\frac34}A(|\nabla (u-k)_+|)dx.
\end{align*}
Since the last integral tends to $0$ 
as $k\to\infty$,  equation \eqref{sep24} is establsihed.

\step 5 \emph{Conclusion.}
\\ Inequality \eqref{ass:iterationJell} tells us that  $\inf_{\ell\in\mathbb N} J_\ell=0$.  Hence, from the definitions of $J_\ell$ and $J(h,\sigma)$ we deduce that
$$
\int_{\mathbb B_\frac12}A((u-K)_+)\,dx\leq J(K,\mathbb B_\frac12)\leq \inf_{\ell\in\mathbb N}J_\ell=0.
$$
Therefore, $u\leq K$ a.e.\ in $\mathbb B_\frac12$. 
\\ In order to prove a parallel lower bound for $u$,   observe that the function $-u$ is a quasiminimizer of the functional defined as in \eqref{eq:int}, with the integrand $f$ replaced with the integral $\widetilde f$ given by
$$\widetilde f(x,t,\xi)=f(x,-t,-\xi) \quad  \text{ for $(x, t, \xi) \in\Omega \times  \R\times\R^n$.}$$
The structure conditions  \eqref{struct2} and \eqref{struct1}  and the growth condition  \eqref{ass1} on the function $f$ are inherited by the function $ \widetilde f$. An application of the above argument to the function $-u$ then tells us that there exists a constant $K'>0$ such that $-u\leq K'$ a.e. in $\mathbb B_\frac12$.  The proof is complete.
\end{proof}

\section*{Compliance with Ethical Standards}\label{conflicts}

\smallskip
\par\noindent
{\bf Funding}. This research was partly funded by:
\\ (i) GNAMPA   of the Italian INdAM - National Institute of High Mathematics (grant number not available)  (A. Cianchi);
\\ (ii) Research Project   of the Italian Ministry of Education, University and
Research (MIUR) Prin 2017 ``Direct and inverse problems for partial differential equations: theoretical aspects and applications'',
grant number 201758MTR2 (A. Cianchi);

\bigskip
\par\noindent
{\bf Conflict of Interest}. The authors declare that they have no conflict of interest.

\end{document}